\numberwithin{equation}{section}
\newcommand{\am}{\ensuremath{{\bf{A}}_M}}%
\newcommand{\Hinf}{\ensuremath{\got{H}^{\infty}}}
\newcommand{\Ad}{\ensuremath{{\mbox{\rm{Ad}}}}}
\newcommand{\ad}{{\mbox{\rm ad}}}
\newcommand{\e}{{\mbox{\rm e}}}
\newcommand{\mb}[1]{{\mbox{\boldmath{$#1$}}}}
\newcommand{\mc}[1]{{\mathcal{#1}}}
\newcommand{\got}[1]{{\mathfrak{#1}}}
\newcommand{\db}[1]{{\mathbb{#1}}}
\newcommand{\pa}{\partial}
\newcommand{\R}{\ensuremath{\mathbb{R}}}
\newcommand{\C}{\ensuremath{\mathbb{C}}}
\newcommand{\F}{\ensuremath{\mathbb{F}}}
\newcommand{\N}{\ensuremath{\mathbb{N}}}
\newcommand{\oo}{\mathbb{I}}
\newcommand{\un}{{\oo_n}}
\newcommand{\dn}{{\oo_{2n}}}
\newcommand{\zn}{{\mathbb{O}_n}}
\newcommand{\Hi}{\ensuremath{\got{H}}}
\newcommand{\g}{\ensuremath{\got{g}}}
\newcommand{\gc}{\ensuremath{\got{g}_{\C}}}
\newcommand{\Ugc}{\ensuremath{\mathcal{U}({\gc}})}
\newtheorem{Remark}{Remark}
\newcommand{\DM}{\ensuremath{{\got{D}}_M }}
\newcommand{\AM}{\ensuremath{{\got{A}}_M }}
\newcommand{\D}{\ensuremath{{\got{D}}}}
\newcommand{\AAA}{\ensuremath{{\db{A}}_M}}
\newcommand{\fl}{\ensuremath{{\mathcal{F}}_{\Hi}}}
\newtheorem{Proposition}{Proposition}
\newtheorem{lemma}{Lemma}
\newtheorem{corollary}{Corollary}
\theoremstyle{definition}
 \def\i{\mathrm{i}}
\def\ee{\mathrm{e}}
\newcommand{\tr}{\ensuremath{{\mbox{\rm{Tr}}}}}%
\newcommand{\dia}{\ensuremath{{\mbox{\rm{diag}}}}}%
\newcommand{\Ham}{\ensuremath{{\mbox{\rm{Ham}}}}}%
\newcommand{\dd}{\operatorname{d}}
\begin{document}

\title{A convenient coordinatization  of    Siegel-Jacobi domains}

\author{Stefan  Berceanu}
\address[Stefan  Berceanu]{Horia Hulubei National
 Institute for Physics and Nuclear Engineering\\
        Department of Theoretical Physics\\
 P.O.B. MG-6, 077125 Magurele, Romania}
\email{ Berceanu@theory.nipne.ro}
\subjclass[2010]{81R30,32Q15,81V80,81S10,34A05}

\keywords{Jacobi   group, coherent and squeezed states,
Siegel-Jacobi domains, fundamental conjecture for homogeneous K\"ahler
manifolds, matrix
Riccati equation, Berezin quantization}

\begin{abstract} 
We determine the homogeneous K\"ahler diffeomorphism $FC$ which
expresses the K\"ahler two-form on the Siegel-Jacobi ball  $\mc{D}^J_n=\C^n\times \mc{D}_n$ 
as the sum of the K\"ahler two-form on $\C^n$ and the one on the
Siegel ball $\mc{D}_n$. The classical motion and quantum evolution  on
$\mc{D}^J_n$ determined by a hermitian  linear Hamiltonian in the generators of
the Jacobi group $G^J_n=H_n\rtimes\text{Sp}(n,\R)_{\C}$ are  described by a
matrix Riccati equation on $\mc{D}_n$ and a linear first order
differential equation in $z\in\C^n$, with coefficients depending also
on $W\in\mc{D}_n$. $H_n$ denotes the $(2n+1)$-dimensional Heisenberg
group. The system of  linear  differential equations
attached to the matrix Riccati equation is a linear Hamiltonian
system on $\mc{D}_n$. When the  transform $FC:(\eta,W)\rightarrow (z,W)$
is applied,  the first order differential  equation in  the variable
$\eta=(\un-W\bar{W})^{-1}(z+W\bar{z})\in\C^n$ becomes decoupled from the
motion on the Siegel ball. Similar considerations are presented for
the Siegel-Jacobi upper   half plane $\mc{X}^J_n=\C^n\times\mc{X}_n$,
where $\mc{X}_n$ denotes the Siegel upper half plane. 
\end{abstract}
\maketitle

\noindent

\setcounter{tocdepth}{3}

\tableofcontents
\newpage

\section{Introduction}

The Jacobi groups \cite{ez} are semidirect products of appropriate semisimple real
algebraic groups of hermitian type with Heisenberg groups \cite{TA99, LEE03}. The Jacobi groups are unimodular,
nonreductive,  algebraic groups of Harish-Chandra type \cite{satake}. The Siegel-Jacobi
domains are nonsymmetric domains associated to the Jacobi groups
by the generalized Harish-Chandra embedding \cite{satake,LEE03}, 
\cite{Y02} - \cite{Y08}.

The holomorphic irreducible unitary representations of the Jacobi groups
based on Siegel-Jacobi domains have been constructed by Berndt, 
B\"{o}cherer, Schmidt, and Takase \cite{bb,bs}, \cite{TA90} - \cite{TA99}.
Some coherent state systems based on Siegel-Jacobi domains have been
investigated in the framework of quantum mechanics, geometric quantization,
dequantization, quantum optics, nuclear structure, and signal processing 
\cite{KRSAR82,Q90,SH03,jac1,sbj,sbcg,gem}.  The Jacobi group was investigated by
physicists under other names as 
 {\it Hagen} \cite{hagen},  {\it Schr\"odinger} \cite{ni},    or {\it Weyl-symplectic} group
 \cite{kbw1}. The Jacobi
group is  responsible  for the   squeezed states  \cite{ken,stol,lu,yu,ho} in quantum optics
\cite{dod,mandel,ali,siv,dr}. 

The  Jacobi group has been   studied  in the papers \cite{jac1,sbj} in 
connection with the group-theoretic approach to coherent states 
\cite{perG}.  We have attached  to the Jacobi group
$G^J_n=H_n\rtimes\text{Sp}(n,\R)_{\C}$ coherent states  based on
Siegel-Jacobi ball  $\mc{D}^J_n$ \cite{sbj}, which, as set, consists
of the points of 
$\C^n\times\mc{D}_n$. $H_n$ denotes the 
$(2n+1)$-dimensional Heisenberg group  and $\mc{D}_n$ denotes the
Siegel ball.    The case
$G^J_1$ was studied  in \cite{jac1}. We have determined the
K\"ahler two-form $\omega_n$ on $\mc{D}^J_n$  from  the K\"ahler
potential  \cite{sbj} -- 
the logarithm   of the scalar product of two coherent  states
\cite{sb6} --  and,  via the partial Cayley
transform, we have determined the K\"ahler two-form $\omega'_n$  on the
Siegel-Jacobi upper-half plane $\mc{X}^J_n=\C^n\times\mc{X}_n$, where
$\mc{X}_n$ is the Siegel upper half plane \cite{mlad}. The K\"ahler two-form
$\omega'_1$ was
 investigated by Berndt \cite{bern}  and K\"ahler
\cite{cal3,cal},  while  $\omega_n$
and  $\omega'_n$ have been investigated also by Yang \cite{Y07} -
\cite{Y10}. $\omega_n$ is written compactly as 
the sum of two terms, one describing  the K\"ahler  two-form on 
$\mc{D}_n$, $\omega_{\mc{D}_n}$,  the other one is 
$\tr(A^t(\un-\bar{W}W)^{-1}\!\wedge\bar{A})$,  where $ A=\dd z +\dd W\bar{\eta}$, and 
$\eta=(\un-W\bar{W})^{-1}(z+W \bar{z})$,  $z\in\C^n, W\in \mc{D}_n$  \cite{mlad,sbj}. Let us
denote by $FC$ the change of variables $FC: ~
\C^n\times\mc{D}_n$$\ni 
(\eta,W)\rightarrow$$ (z,W)\in\mc{D}^J_n$, $z= \eta-W\bar{\eta}$. 
We put
this change of variables in connection with the celebrated fundamental
conjecture \cite{GV,DN} on the Siegel-Jacobi ball $\mc{D}^J_n$, 
as we did in \cite{FC1,FC} for the Siegel-Jacobi disk $\mc{D}^J_1$. We
also make similar considerations
 for the Siegel-Jacobi upper half plane $\mc{X}^J_n$.

In \cite{sbcag,sbl} we have considered the problem of {\it dequantization} of a dynamical system with Lie group of
symmetry $G$   on a Hilbert space
$\Hi$ in 
Berezin's approach \cite{berezin2,berezin1} in the simple case of linear Hamiltonians.
Linear Hamiltonians in generators of the Jacobi group appear in many
physical problems of quantum mechanics, as in the case of the  quantum oscillator
acted on by a variable external force \cite{fey,sw,hs}.

What we find out is that the  classical motion and quantum
evolution  on $\mc{D}^J_n$ determined by  linear Hamiltonians in the generators of the
Jacobi group $G^J_n$ are  
described by a matrix Riccati equation on $\mc{D}_n$ and a first order
coupled  linear differential 
equation for  $z\in \C^n$. The nice thing is that via the $FC$
transform, the differential equation for  $\dot{\eta}$ does not depend on $W\in\mc{D}_n$. The variables $(\eta, W)$ appear to offer a 
convenient parametrization of the Siegel-Jacobi ball.   Similar considerations are
presented for the  equations of motion on $\mc{X}^J_n$. 

In the present paper we  extend to  $G^J_n$ our results established in
\cite{FC} for $G^J_1$.

The paper is laid out as follows. 
The notation for the Jacobi algebra $\got{g}^J_n$   is fixed in  \S
\ref{JMAREALGEBRA}. Starting with some notation on coherent states
\cite{perG,sb6},  \S  \ref{mare} deals with  coherent states
based on $\mc{D}^J_n$ \cite{sbj}. The holomorphic representation
\cite{last,sb6}  of 
the generators of the Jacobi group  as
first order differential operators with polynomial
coefficients defined  on $\mc{D}^J_n$ is given  in \S\ref{DIFFAC}. It is
verified that the differential realization of   the generators of
$\got{sp}(n,\R)_{\C}$ has the right   hermiticity properties with
respect to  the scalar product of functions on $\mc{D}_n$ -  Lemma
\ref{lema2}, and similarly for the generators of $\got{g}^J_n$ - 
Lemma \ref{LEMMA4}. In \S \ref{RELGRJ}  we recall  the expression of
$\omega'_n$ and in Proposition \ref{PARTC}  we show that the partial
Cayley transform -the transform which connects $\mc{D}^J_n$ and
$\mc{X}^J_n$ - is a K\"ahler
homogenous diffeomorphism. In Proposition \ref{LKJ} of \S\ref{FCNN}  we
determine the  $FC$-transform for
the Siegel-Jacobi domains 
$\mc{D}^J_n$ and $\mc{X}^J_n$. The proof is inspired by the paper
\cite{cal3}  of
K\"ahler. Corollary \ref{UNICUC}  expresses  the reproducing kernel and the
scalar product in the
variables $(\eta, W)\in\C^n\times\mc{D}_n$. \S \ref{CLSQ1} is devoted to classical
motion and
quantum evolution on the Siegel-Jacobi domains determined by hermitian
linear
Hamiltonians in the generators of the Jacobi group $G^J_n$.  The
equations of motion are written down explicitly in
Proposition \ref{POYT} in \S \ref{lasst} and their integration
is discussed in \S \ref{solution}. Use is made of the methods of
\cite{levin}  to
integrate  the   matrix
Riccati differential  equation on manifolds  by linearization,
previously 
applied  in \cite{sbl} in the case of hermitian symmetric spaces.   The last two paragraphs in
\S \ref{FAZE} refer to the
Berry phase \cite{swA} on $\mc{D}^J_n$ and the energy function associated to the
Hamiltonian linear in the generators of the Jacobi group $G^J_n$
expressed in the variables $(\eta, W)\in\C^n\times\mc{D}_n$. In these
variables the energy function is written down as the sum of a real  function in
$\eta$ and  one in $W$. 

\textbf{Notation.} We denote by $\mathbb{R}$, $\mathbb{C}$, $\mathbb{Z}$,
and $\mathbb{N}$ the field of real numbers, the field of complex numbers,
the ring of integers, and the set of non-negative integers, respectively. 
$M_{mn}(\mathbb{F})\approxeq\mathbb{F}^{mn}$ denotes the set of all $m\times
n $ matrices with entries in the field $\mathbb{F}$. $M_{n1}(\mathbb{F})$ is
identified with $\mathbb{F}^n$. Set $M(n,\mathbb{F})=M_{nn}(\mathbb{F})$.
For any $A\in M(n,\mathbb{F})$, $A^{t}$ denotes the transpose matrix of
$A$, $A^s=(A+A^t)/2$ and $A^a=(A-A^t)/2$. For $A\in M_{n}(\mathbb{C})$, $\bar{A}$ denotes the conjugate matrix
of $A$ and $A^{*}=\bar{A}^{t}$. For $A\in M_n(\mathbb{C})$, the
inequality $A>0$ means that $A$ is positive definite. The identity matrix of
degree $n$ is denoted by $\un$ and  $\zn$ denotes the $M_n(\F)$-matrix with all entries $0$. We denote by
$\text{diag}(\alpha_1,\dots,\alpha_n$)  the matrix which has the
elements $\alpha_1,\dots,\alpha_n$ on the diagonal and all the other
elements 0.
If $A$ is a linear operator, we denote by
$A^{\dagger}$ its adjoint. We consider a complex separable Hilbert space $\got{H}$ endowed with a   scalar product
which  is antilinear in the first argument,
$<\lambda x,y>=\bar{\lambda}<x,y>$, $x,y\in\got{H}$,
$\lambda\in\C\setminus 0$.
 A complex analytic manifold is
{\it K\"ahlerian}  if it is endowed with a Hermitian metric whose imaginary
part $\omega$  has $\dd\omega = 0$ \cite{helg}. A coset space $M=G/H$  is {\it homogenous
K\"ahlerian} if it caries a K\"ahlerian structure invariant under the
group $G$ \cite{bo}. By a  {\it  K\"ahler homogeneous diffeomorphism}
we mean a 
diffeomorphism $\phi:M\rightarrow N$  of homogeneous
K\"ahler manifolds  such that $\phi^*\omega_N=\omega_M$. 
$\Ham(M)$ denotes the Hamiltonian vector fields on the
manifold $M$. We use Einstein convention  that repeated indices are
implicitly summed over.
If $W=(w)_{ij}$ is a symmetric matrix, we  introduce the symbols $\nabla_{ij}=\nabla_{ji}=\chi_{ij}\frac{\pa}{\pa w_{ij}}$,
where $\chi_{ij}=\frac{1+\delta_{ij}}{2}$.
In the expression  $\chi_{ij}\frac{\pa}{\pa w_{ij}}$ of $\nabla_{ij}$ no summation is assumed.


\section{The Jacobi algebra $\got{g}^J_n$ }\label{JMAREALGEBRA}

Let $\got{h}_n$ denotes the $(2n+1)$-dimensional  
Heisenberg  algebra,  isomorphic to the  algebra 
 \begin{equation}\label{nr00}\got{h}_n =
<\i s 1+\sum_{i=1}^n (x_i a_i^{\dagger}-\bar{x}_ia_i)>_{s\in\R ,x_i\in\C}
,\end{equation} 
 where ${ a}_i^{\dagger}$ (${ a}_i$) are  the boson creation
(respectively, annihilation)
operators,  which verify the canonical commutation relations
\begin{equation}\label{baza1M}
[a_i,a^{\dagger}_j]=\delta_{ij}; ~ [a_i,a_j] = [a_i^{\dagger},a_j^{\dagger}]= 0 . 
\end{equation}
The displacement operator
\begin{equation}\label{deplasareM}
D(\alpha ):=\exp (\alpha a^{\dagger}-\bar{\alpha}a)=\exp(-\frac{1}{2}|\alpha
|^2)  \exp (\alpha a^{\dagger})\exp(-\bar{\alpha}a),
\end{equation}
verifies the composition rule:
\begin{equation}\label{thetahM}
D(\alpha_2)D(\alpha_1)=\ee^{\i\theta_h(\alpha_2,\alpha_1)}
D(\alpha_2+\alpha_1) , 
~\theta_h(\alpha_2,\alpha_1):=\Im (\alpha_2\bar{\alpha_1}) .
\end{equation}
  Here we have used the notation
 $\alpha \beta=  \alpha_i\beta_i$, where $\alpha = (\alpha_i )_{i=1,\dots,n}\in\C^n$.

The composition law of the Heisenberg  group $H_n$ is:
\begin{equation}\label{clh}
(\alpha_2,t_2)\circ (\alpha_1,t_1)=(\alpha_2+\alpha_1, t_2+t_1+
\Im (\alpha_2\bar{\alpha_1})). 
\end{equation}
If  we identify $\R^{2n}$ with $\C^n$, $(p,q)\mapsto \alpha$:
\begin{equation}
\label{change}
\alpha = p +\i q, ~p,q\in\R^n,
\end{equation} 
then $$ \Im (\alpha_2\bar{\alpha_1})=
(p^t_1, q^t_1)J\left(\begin{array}{c}p_2 \\ q_2\end{array}\right),
\quad \text{where} ~
J=\left(\begin{array}{cc} \zn & \un\\ -\un & \zn \end{array}\right) .$$

The Jacobi algebra is the  the semi-direct sum
$\got{g}^J_n:= \got{h}_n\rtimes \got{sp}(n,\R )_{\C}$,
where $\got{h}_n$ is an  ideal in $\got{g}^J_n$,
i.e. $[\got{h}_n,\got{g}^J_n]=\got{h}_n$, 
determined by the commutation relations:
\begin{subequations}\label{baza3M}
\begin{eqnarray}
\label{baza31}[a^{\dagger}_k,K^+_{ij}] & = & [a_k,K^-_{ij}]=0,  \\
~[a_i,K^+_{kj}]  & = &  
\frac{1}{2}\delta_{ik}a^{\dagger}_j+\frac{1}{2}\delta_{ij}a^{\dagger}_k ,~
 [K^-_{kj},a^{\dagger}_i]  = 
\frac{1}{2}\delta_{ik}a_j+\frac{1}{2}\delta_{ij}a_k , \\
~  [K^0_{ij},a^{\dagger}_k] & = & \frac{1}{2}\delta_{jk}a^{\dagger}_i,~
[a_k,K^0_{ij}]= \frac{1}{2}\delta_{ik}a_{j} .
\end{eqnarray}
\end{subequations}

The  generators $K^{0,+,-}$ of $\got{sp}(n,\R )_{\C}$ verify the  commutation
relations 
\begin{subequations}\label{baza2M}
\begin{eqnarray}
 [K_{ij}^-,K_{kl}^-] & = & [K_{ij}^+,K_{kl}^+]=0 , ~2[K^-_{ij},K^0_{kl}]  =  K_{il}^-\delta_{kj}+K^-_{jl}\delta_{ki}\label{baza23}, \\
 2[K_{ij}^-,K_{kl}^+] & = & K^0_{kj}\delta_{li}+
K^0_{lj}\delta_{ki}+K^0_{ki}\delta_{lj}+K^0_{li}\delta_{kj}
\label{baza22}, \\
2[K^+_{ij},K^0_{kl}] & = & -K^+_{ik}\delta_{jl}-K^+_{jk}\delta_{li}
 \label{baza24}, ~
 2[K^0_{ji},K^0_{kl}] =  K^0_{jl}\delta_{ki}-K^0_{ki}\delta_{lj} .  
\end{eqnarray}
\end{subequations}

Now we briefly fix the notation  concerning the symplectic group.
 For $A\in \text{GL}(2n,\F)$, we have (here  $\F$ is any of the
 fields $\R, \C$):
$  A\in \text{Sp}(n,\F )~\leftrightarrow ~ A^tJA=J $. 
Consequently, a matrix $X\in\got{gl}(2n,\F)$ is in $\got{sp}(n,\F)$ iff $X^tJ+JX=
0$. The matrices from  $\got{sp}(n,\R)$ are also called
{\it infinitesimally symplectic} or {\it Hamiltonians} \cite{mey}.

We
recall  also that $g\in \text{U}(n,n)$ iff $gKg^*=K$, where
$K=\left(\begin{array}{cc}
\un & \zn \\ \zn & -\un\end{array}\right)$.

We summarize some properties of symplectic  and Hamiltonian
matrices (cf. \cite{sieg,bar70,fol,mey};
for  the  characterization of  the eigenvalues, see 
\cite{mey,am,lm,ya}):
\begin{Remark}\label{rem77} 

a) $X$ is a Hamiltonian matrix iff one of the following equivalent
conditions are fulfilled:\\
\mbox{~~~~~}1) $X^tJ+JX=0;$\\
\mbox{~~~~~}2) $X=JR$, where  $R\in M(2n,\R)$ is a symmetric matrix;\\
\mbox{~~~~~}3) $JX$ is symmetric;\\
\mbox{~~~~~}4) $X\in M(2n,\R)$ has the form 
\begin{equation}\label{XREAL}
X=\left( \begin{matrix} a & b \\c & -a^t\end{matrix}\right)\in\got{sp}(n,\R),
\quad
b=b^t,\quad c=c^t,  ~~a,b,c\in M(n,\R );
\end{equation} 

b)  If $ M=\left(\begin{array}{cc}a & b\\c& d\end{array}\right) \in
 {\emph{\text{Sp}}}(2n,\R)$, then the matrices $a,b,c,d\in M(n,\R)$ have the properties
\begin{subequations}\label{lica}
\begin{align}
a^tc & = c^ta, ~ b^t d = d^t b, ~ a^td - c^t b =\un ; \label{lica1}\\
ab^t & = ba^t, ~ cd^t=dc^t, ~ ad^t-bc^t =\un . \label{lica2}
\end{align}
\end{subequations}

c) Under the identification \eqref{change} of $\R^{2n}$ with $\C^n$, we
have the correspondence  
\begin{equation}\label{pana}
M=\left(\begin{array}{cc}a & b\\c & d\end{array}\right) \in  M(2n,\R
)\leftrightarrow M_{\C}   = \mc{C}^{-1}M\mc{C}  = \left(\begin{array}{cc} p & q \\ \bar{q}
&\bar{p} \end{array}\right),~ p, q\in M(n,\C ),  
\end{equation}
where \begin{equation}\label{CCC}
\mc{C}=\left(\begin{array}{cc} \i \un & \i \un
\\
-\un  & \un\end{array}\right), ~ \mc{C}^{-1}= \frac{1}{2}
\left(\begin{array}{cc} - \i \un  & -\un \\ -\i \un &
    \un\end{array}\right); \end{equation}
\begin{subequations} \label{CUCURUCU}
\begin{align}
2 a = & p+q +\bar{p}+\bar{q} ,~
2 b = \i(\bar{p}-\bar{q}-p+q) ,\\
2 c = & \i(p+q-\bar{p}-\bar{q}), ~
 2 d =    p-q +\bar{p}-\bar{q}; 
\end{align}
\end{subequations}
  \begin{equation} \label{CUCURUCU1}
2p =  a+d+\i(b-c), \quad 2q= a-d -\i (b+c). 
\end{equation}
In particular, to the Hamiltonian matrix     \eqref{XREAL}  $X$ we associate
 $ X_{\C}=\mc{C}^{-1}X\mc{C}\in \got{sp}(n,\R)_{\C}$$=\got{sp}(n,\C)\cap \got{u}
 (n,n),$
\begin{equation}\label{xXC}
 X_{\C}=\left(\begin{matrix} p & q \\ \bar{q} &
\bar{p}\end{matrix}\right),~ p^*=-p, \quad q^t= q , 
\end{equation}
where
 \begin{equation}\label{MNB}
2p= a-a^t +\i (b-c); ~2q=
a+a^t-\i (b+c) .
\end{equation}

The  relations inverse to \eqref{MNB}
are 
\begin{equation}\label{MNB1}
2 a = p +\bar{p}+q+\bar{q}; 2 b= \i (\bar{p}-p+q-\bar{q}) ; 2 c= \i
(p+q-\bar{p}-\bar{q}) .
\end{equation}

Also, we have 
\begin{equation}\label{ciudat}
X\in\got{sp}(n,\R)_{\C}\quad{\emph{iff}} \quad X=\i K H,~H^{*}=H,~H\in M(2n,\C).
\end{equation}

d) To every $g\in {\rm{Sp}}
(n,\R )$, we associate via  \eqref{pana}
 $g\mapsto g_{\C}\in \rm{Sp}(n,\R)_{\C}\equiv\rm{Sp}(n, \C)\cap
\rm{U}(n,n)$
 \begin{equation}\label{dgM}
g_{\C}= \left( \begin{array}{cc}p & q\\ \bar{q} &
\bar{p}\end{array}\right),
\end{equation}
where the matrices $p,q\in M(n,\C)$ have the properties
\begin{subequations}\label{simplectic}
\begin{eqnarray}
& pp^*- qq^* = \un,\quad  pq^t=qp^t; \label{simp1}\\
& p^*p-q^t\bar{q} = \un ,\quad p^t\bar{q}=q^*p .\label{simp2}
\end{eqnarray}
\end{subequations}

e) The characteristic polynomial of a real Hamiltonian
  matrix is an even polynomial. If $\lambda$ is an eigenvalue of a
  Hamiltonian matrix with multiplicity $k$, so are $-\lambda,
  \bar{\lambda}, -\bar{\lambda}$ with the same multiplicity.  Moreover, 0, if it occurs, has
even multiplicity.  If
  $A\in\got{sp}(n,\R)$ has distinct eigenvalues
  $\lambda_1,\dots,\lambda{_n},
-\lambda_1,\dots,-\lambda{_n}$, there exists a symplectic matrix $S$
(possibly complex) such as $S^{-1}AS={\emph{\text{diag}}}(  \lambda_1,\dots,\lambda_n,
-\lambda_1,\dots,-\lambda_n)$.

The characteristic polynomial of a symplectic matrix is a reciprocal
polynomial. If $\lambda$ is an eigenvalue of a real symplectic matrix
with multiplicity $k$, so are and
$\lambda^{-1}, \bar{\lambda}, \bar{\lambda}^{-1}$ with the same multiplicity. Moreover, the
multiplicities of the eigenvalues $+1$ and $-1$, if they occur, are
even.

\end{Remark}
\section{Coherent states on $\mc{D}^J_n$}\label{mare} 
In order to fix the  notation on coherent states \cite{perG}, let us consider the triplet $(G, \pi , \got{H} )$, where $\pi$ is
 a continuous, unitary 
representation
 of the  Lie group $G$
 on the   separable  complex  Hilbert space \Hi .

For $X\in\g$, where \g ~is the Lie algebra of the Lie group $G$,  let
us define the (unbounded) operator $\dd\pi(X)$ on \Hi~ by
$\dd\pi(X).v:=\left. {\dd}/{\dd t}\right|_{t=0} \pi(\exp tX).v$,
whenever the limit on the right hand side exists. 
   We obtain a
representation of the Lie algebra \g~ on \Hinf (\Hinf  denotes the
smooth vectors of \Hi), {\it the derived
representation}, and we denote
${\mb{X}}.v:=\dd\pi(X).v$ for $X\in\g ,v\in \Hinf$. 

Let us now denote by $H$  the isotropy group.  
We  consider (generalized) coherent
 states on complex  homogeneous manifolds $M\cong
G/H$ \cite{perG}. 
{\em The coherent vector
 mapping} is defined locally, on a coordinate neighborhood $\mc{V}_0$,
 $\varphi : M\rightarrow \bar{\Hi}, ~ \varphi(z)=e_{\bar{z}}$
(cf. \cite{last,sb6}),
where $ \bar{\Hi}$ denotes the Hilbert space conjugate to $\Hi$.
The  vectors $e_{\bar{z}}\in\bar{\Hi}$ indexed by the points
 $z \in M $ are called  {\it
Perelomov's coherent state vectors}. Explicitly, $e_z=\exp(\sum_{\alpha\in\Delta_+}z_{\alpha}\mb{X}_{\alpha})e_0$, where
$e_0$ is the extremal weight vector of the representation $\pi$, $\Delta_+$ are the positive roots of the Lie algebra $\got{g}$ of $G$,
and $X_\alpha,\alpha\in\Delta$,  are the  generators \cite{perG}. 

The space \fl~ of holomorphic functions 
is defined as the set of square integrable 
functions 
 with respect to  the scalar product
\begin{equation}\label{scf}
(f,g)_{\fl} =\int_{M}\bar{f}(z)g(z)\dd \nu_M(z,\bar{z}),
~~\dd{\nu}_{M}(z,\bar{z})=\frac{\Omega_M(z,\bar{z})}{(e_{\bar{z}},e_{\bar{z}})}.
\end{equation}
Here  $\Omega_M$ is the normalized  $G$-invariant volume form
\begin{equation}
\Omega_M:=(-1)^{\binom {n}{2}}\frac {1}{n!}\;
\underbrace{\omega\wedge\ldots\wedge\omega}_{\text{$n$ times}}\ ,
\end{equation}
and the $G$-invariant  K\"ahler two-form $\omega$ on the $2n$-dimensional
manifold  $M$ is given by
\begin{equation}\label{kall}
\omega(z)=\i\sum_{\alpha\in\Delta_+} \mc{G}_{\alpha,\beta}  \dd z_{\alpha}\wedge
\dd\bar{z}_{\beta}, 
\end{equation}
\begin{equation}\label{Ter} 
\mc{G}_{\alpha,\beta}=\frac{\pa^2}{\pa
  z_{\alpha} \pa\bar{z}_{\beta}} \log <e_{\bar{z}},e_{\bar{z}}>,
~\mc{G}_{\alpha,\beta}= \bar{\mc{G}}_{\beta,\alpha}.
\end{equation}

   (\ref{scf})
is nothing else but   {\em {Parseval   overcompletness
 identity}}  \cite{berezin}.
 
Let us  now introduce the map $\Phi :\Hi^{\star}\!\rightarrow \fl$ ,
\begin{equation}\label{aa}
\Phi(\psi):=f_{\psi},
f_{\psi}(z)=\Phi(\psi )(z)=(\varphi (z),\psi)_{\Hi}=(e_{\bar{z}},\psi)_{\Hi},
~z\in{\mathcal{V}}_0, ~\mathcal{V}_0\subset M, 
\end{equation}
where we have identified the space $\overline{\Hi}$  complex conjugate
 to \Hi~  with the dual
space
$\Hi^{\star}$ of $\Hi$.

Perelomov's coherent state vectors  \cite{perG}  associated to the group $G^J_n$ with 
Lie algebra the Jacobi algebra $\got{g}^J_n$, based on the complex
 $N$-dimensional
($N= \frac{n(n+3)}{2}$)  manifold - {\it the Siegel-Jacobi ball}
$ \mc{D}^J_n:= H_n/\R\times \text{Sp}(n,\R )_{\C}/\text{U}(n)=
\C^n\times\mc{D}_n $ -
are defined as \cite{sbj,perG}
\begin{equation}\label{csuX}
e_{z,W}= \exp ({\mb{X}})e_0, 
~\mb{X} := \sum_{i=1}^n z_i \mb{a}^{\dagger}_i + \sum_{i,j =1}^n w_{ij}\mb{K}^+_{ij},~
 z\in \C^n;    W\in\mc{D}_n.
\end{equation}

 The non-compact hermitian
symmetric space $ \operatorname{Sp}(n, \R
)_{\C}/\operatorname{U}(n)$ admits a matrix realization  as a bounded
homogeneous domain, the Siegel ball $\mc{D}_n$
\begin{equation}\label{dn}
\mc{D}_n:=\{W\in  M (n, \C ): W=W^t, \un-W\bar{W} > 0\}.
\end{equation}
$\mc{D}_n$ is a hermitian symmetric space of type CI (cf.~Table V, p.~
518, in \cite{helg}), identified with the  symmetric bounded
domain of type II, $\got{R}_{II}$ in Hua's notation \cite{hua}.
\enlargethispage{1cm}

The vector $e_0$  appearing in (\ref{csuX})   verifies the relations 
\begin{equation}\label{vacuma}
\mb{a}_ie_o= 0,~ i=1, \cdots, n; ~
\mb{K}^+_{ij} e_0  \not=  0 ,~
\mb{K}^-_{ij} e_0 = 0 ,~
\mb{K}^0_{ij} e_0 =  \frac{k_i}{4}\delta_{ij} e_0, ~i,j=1,\dots,n .
\end{equation}
In (\ref{vacuma}), $e_0=e^H_0\otimes e^K_0$, where $e^H_0$ is the
minimum weight vector (vacuum) for the Heisenberg group $H_n$ with
respect to the representation \eqref{deplasareM}, while $e^K_0$ is the
extremal weight vector for $\text{Sp}(n,\R)_{\C}$ corresponding to the
weight $k$ in (\ref{vacuma}) with respect to a unitary  representation
$S$,  see details in \cite{sbj}.

The following proposition describes the holomorphic action of the
Jacobi group 
on the Siegel-Jacobi ball and some geometric
properties of $\mathcal{D}^J_n$ (cf. \cite{sbj}): 
\begin{Proposition}\label{mm11}
Let us consider the action $S(g)D(\alpha )e_{z,W}$, where $g\in
\operatorname{Sp}(n,\R )_{\C}$ is of the form \eqref{dgM},
\eqref{simplectic}, $D(\alpha)$ is given by  \eqref{deplasareM}  and the coherent state vector is
defined in \eqref{csuX}. Then formulas \eqref{TOIU} hold:
\begin{subequations}\label{TOIU}
\begin{align}\label{xx}
S(g)D(\alpha )e_{z,W} & =\lambda e_{z_1,W_1}, \quad \lambda = \lambda
(g,\alpha; z,W) , \\
\label{x44} W_1 & =g\cdot
W= (pW+q)(\bar{q}W+\bar{p})^{-1}=(Wq^*+p^*)^{-1}(q^t+Wp^t),\\
\label{xxxX}
z_1 & = (Wq^*+ p^*)^{-1}(z+ \alpha -W\bar{\alpha}),\\
\label{x1} \lambda & = \det (Wq^*+p^*)^{-k/2} \exp
\left(\tfrac{\bar{\eta}}{2}z -\tfrac{\bar{\eta_1}}{2}{z_1}\right) \exp
(\i\theta_h(\alpha , \eta)),\\ \label{x2} \eta & =
(\un-W\bar{W})^{-1}(z+W\bar{z}),\\
\label{x3} \eta_1 & =p (\alpha + \eta ) + q (\bar{\alpha}+\bar{\eta}).
\end{align}
\end{subequations}
 The action of the Jacobi group $G^J_n$ on the
manifold  $\mathcal{D}^J_n$   is given by equations \eqref{xxxX},
\eqref{x44}. The composition law is
\begin{equation}\label{compositieX}
(g_1,\alpha_1,t_1)\circ (g_2,\alpha_2, t_2)= \big(g_1\circ g_2,
g_2^{-1}\cdot \alpha_1+\alpha_2, t_1+ t_2 +\Im
(g^{-1}_2\cdot\alpha_1\bar{\alpha}_2)\big),
\end{equation}
and   if $g$ is as in \eqref{dgM}, then $g\cdot\alpha :=\alpha_g$ is given by 
$\alpha_g = p\,\alpha +
q\,\bar{\alpha} $, and $g^{-1}\cdot\alpha ={p}^*\alpha
-q^t\bar{\alpha}$.

The
manifold $\mc{D}^J_n$ has the K\"ahler potential \eqref{kelerX},
$f=\log K$, with $K$ given by \eqref{kul}, 
\begin{equation}\label{kelerX}
\begin{split}
f = & -\tfrac{k}{2}\log \det (\un-W\bar{W})+
\bar{z}^t(\un-W\bar{W})^{-1}{z}\\
& + \tfrac{1}{2}z^t[\bar{W}(\un-W\bar{W})^{-1}]z
+\tfrac{1}{2}\bar{z}^t [ (\un-W\bar{W})W]\bar{z}.
\end{split}
\end{equation}
 The  K\"ahler two-form $\omega_n$, deduced as in \eqref{kall},  $G^J_n$-invariant to the action
\eqref{x44}, \eqref{xxxX}  is
\begin{equation}\label {aabX}
\begin{split}
- \i\omega_n &\! =\! \tfrac{k}{2}\tr (B\wedge\bar{B})\!
 +\!\tr (A^t\bar{M}\wedge \bar{A}),A =\dd z\!+\!\dd W\bar{\eta},\\
B & = M\dd W,    M ~\! =\!(\un-W\bar{W})^{-1},~\eta=M(z+W\bar{z}). 
\end{split}
\end{equation}

The scalar product $K:M\times
\bar{M}\rightarrow\C$,  $K(\bar{x},\bar{V};y,W)=(e_{x,V},e_{y,W})_k $ 
is:
\begin{equation}\label{KHKX}
\begin{split}
(e_{x,V},e_{y,W})_k  & = \det (U)^{k/2} \exp
F(\bar{x},\bar{V};y,W),~U= (\un-W\bar{V})^{-1}; \\
  2 F(\bar{x},\bar{V};y,W) & =  2\langle x, U y\rangle +\langle V\bar{y},U y\rangle +\langle x,U
W\bar{x}\rangle . 
\end{split}
\end{equation}
In particular,  the reproducing kernel $K=(e_{z,W},e_{z,W})$ is 
\begin{equation}\label{kul}
K=\det(M)^{\frac{k}{2}}\exp F, M=(\un-W\bar{W})^{-1}, 
\end{equation}
\begin{equation}\label{FfF}
2F=2\bar{z}^tMz+z^t\bar{W}Mz+\bar{z}^tMW\bar{z}.
\end{equation}
The Hilbert space of holomorphic functions $\mc{F}_K$
associated to the holomorphic kernel $K$ given by \eqref{kul} is endowed with the
scalar product of the type \eqref{scf}  
\begin{equation}\label{ofiX}
(\phi ,\psi )= \Lambda_n \int_{z\in\C^n;
\un -W\bar{W}>0}\bar{f}_{\phi}(z,W)f_{\psi}(z,W)Q K^{-1} \dd z \dd W,
\end{equation}
 where the normalization constant
$\Lambda_n$ is given by \eqref{ofi1X}
\begin{equation}\label{ofi1X}
\Lambda_n = \frac{k-3}{2\pi^{n(n+3)/2}}\prod_{i=1}^{n-1}
\frac{(\frac{k-3}{2}-n+i)\Gamma (k+i-2)}{\Gamma [k+2(i-n-1)]} .
\end{equation}
and the density of volume given is given  by \eqref{QQQ}
\begin{equation}\label{QQQ}
Q = \det (\un-W\bar{W})^{-(n+2)},
\dd z = \prod_{i=1}^n \dd \Re z_i \dd \Im z_i; ~~ \dd W = \prod_{1\le i\le j \le n}
\dd \Re w_{ij} \dd \Im {w}_{ij}. 
\end{equation}
Comparatively   with the case  of the
symplectic group, a shift of $p$ to $p- 1/2$ in the normalization
constant \eqref{ofiW}
$\Lambda_n = \pi^{-n}J^{-1}(p)$ is obtained.
We write down the scalar product \eqref{ofiX} also as ($ p=k/2-n-2$)
\begin{equation}\label{ofiXX}
(\phi ,\psi )\!= \!\Lambda_n \int_{z\in\C^n; W\in\mc{D}_n}\!\bar{f}_{\phi}(z,W)f_{\psi}(z,W)\rho_1 \dd z \dd W,\rho_1
\!=\!\det(\un\!-W\bar{W})^ p\exp{-F}.
\end{equation}
\end{Proposition}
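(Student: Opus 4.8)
The plan is to establish the seven clusters of formulas in the order they appear, each by a direct representation‑theoretic or matrix computation; most of this reconstructs \cite{sbj}. \emph{The action \eqref{TOIU}.} Because $G^J_n=H_n\rtimes\operatorname{Sp}(n,\R)_{\C}$ is a semidirect product and $S$ intertwines the Weyl representation of $H_n$, one has the covariance relation $S(g)D(\alpha)=D(\alpha_g)S(g)$ with $\alpha_g=p\alpha+q\bar\alpha$, so I would compute $D(\alpha)e_{z,W}$ and $S(g)e_{z,W}$ separately. For $D(\alpha)e_{z,W}$ I disentangle $D(\alpha)=\exp(-\tfrac12|\alpha|^2)\exp(\alpha\mb{a}^{\dagger})\exp(-\bar\alpha\mb{a})$ and push $\exp(-\bar\alpha\mb{a})$ through $\exp(\mb{X})$, where $\mb{X}=\sum z_i\mb{a}_i^{\dagger}+\sum w_{ij}\mb{K}^+_{ij}$, using \eqref{baza1M} and \eqref{baza3M}: since $[\mb{a}_i,\mb{X}]=z_i+(W\mb{a}^{\dagger})_i$ and $[\mb{a}_j,[\mb{a}_i,\mb{X}]]=w_{ij}$ with all higher brackets vanishing, the Baker--Campbell--Hausdorff series terminates and the annihilation factor is absorbed into $e_0$ by \eqref{vacuma}; this gives $D(\alpha)e_{z,W}=\lambda'e_{z',W}$ with $z'=z+\alpha-W\bar\alpha$, the phase $\exp(\i\theta_h(\alpha,\eta))$ and the prefactor in \eqref{x1}, with $\eta=(\un-W\bar W)^{-1}(z+W\bar z)$. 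For $S(g)e_{z,W}$ I use the Gauss (Harish-Chandra) decomposition $g\exp(\mb{X})=\exp(\mb{X}_1)\exp(\text{Cartan})\exp(\text{negative roots})$ in the complexified group; on $e_0$ the negative‑root factor acts trivially and the Cartan factor scalarly by \eqref{vacuma}, while reading off $\exp(\mb{X}_1)$ in the $2n\times2n$ matrix realization of $\operatorname{Sp}(n,\R)_{\C}$ (part d) of Remark \ref{rem77}) yields the fractional‑linear action \eqref{x44} on the Siegel ball and the automorphy factor $\det(Wq^*+p^*)^{-k/2}$. Composing the two steps through the covariance relation gives \eqref{xxxX}, \eqref{x1}, \eqref{x3}; in particular the $G^J_n$‑action on $\mc{D}^J_n$ is \eqref{x44}, \eqref{xxxX}.

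\emph{The composition law \eqref{compositieX}} then follows by applying $\big(S(g_1)D(\alpha_1)\big)\big(S(g_2)D(\alpha_2)\big)$ to a coherent vector and collecting the transformed parameters, or directly from the group law of $H_n\rtimes\operatorname{Sp}(n,\R)_{\C}$ with \eqref{clh} and the adjoint action $g^{-1}\cdot\alpha=p^*\alpha-q^t\bar\alpha$.

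\emph{K\"ahler potential and kernels.} Since the scalar product is antilinear in the first slot, $(e_{x,V},e_{y,W})=(e_0,\exp(\bar{\mb{X}}_V^{*})\exp(\mb{X}_W)e_0)$ with $\bar{\mb{X}}_V^{*}=\sum\bar x_i\mb{a}_i+\sum\bar v_{ij}\mb{K}^-_{ij}$; disentangling $\exp(\bar{\mb{X}}_V^{*})\exp(\mb{X}_W)$ into (raising)(Cartan)(lowering) order by BCH in $\got{g}^J_n$ and using \eqref{vacuma} reduces the matrix element to the Cartan exponential applied to $e_0$, producing $\det U^{k/2}$, $U=(\un-W\bar V)^{-1}$, and the Gaussian $F$ of \eqref{KHKX}; specializing to $(x,V)=(y,W)$ gives \eqref{kul}, \eqref{FfF}, hence by \eqref{Ter} the K\"ahler potential \eqref{kelerX}. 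The two‑form \eqref{aabX} is $\i\pa\bar\pa f$: differentiating \eqref{FfF} with the usual identities for $\pa_W\log\det(\un-W\bar W)$ and $\pa_W(\un-W\bar W)^{-1}$ and then introducing $A=\dd z+\dd W\bar\eta$, $B=M\dd W$, $M=(\un-W\bar W)^{-1}$, recasts the result compactly. Its $G^J_n$‑invariance is automatic: by \eqref{x1} the kernel $K$ is multiplied under \eqref{x44}, \eqref{xxxX} by $|\lambda|^2$, and $\log|\lambda|^2$ is a sum of a holomorphic and an antiholomorphic function of $(z,W)$ (plus conjugates), which is annihilated by $\pa\bar\pa$.

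\emph{The Hilbert space \eqref{ofiX}--\eqref{ofiXX}.} One must choose $\Lambda_n$ and the weight $QK^{-1}$ so that $K$ reproduces; by $G^J_n$‑invariance of the measure it suffices to test the reproducing identity at $(z,W)=(0,\zn)$, which collapses it to a Gaussian integral over $z\in\C^n$ — this is what turns the exponent $p$ into $p-\tfrac12$, as stated — followed by Hua's integral $\int_{\mc{D}_n}\det(\un-W\bar W)^{s}\,\dd W$ over the Siegel ball; the latter yields the exponent $-(n+2)$ in $Q$ and the product of $\Gamma$‑functions in \eqref{ofi1X}, and \eqref{ofiXX} follows by absorbing $K^{-1}$ with $p=k/2-n-2$. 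I expect this last step to be the main obstacle: fixing the exact constant $\Lambda_n$ hinges on the classical normalization integrals over bounded symmetric domains and on tracking the convergence range of $k$, whereas the disentangling computations for the action and for the kernel, though lengthy, are routine once the low nilpotency of $\ad_{\mb{a}_i}$ (and the weight grading controlling $\ad_{\mb{K}^-_{ij}}$) on $\mb{X}$ is exploited.
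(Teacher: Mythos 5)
The paper does not actually prove Proposition \ref{mm11}: it is recalled verbatim from \cite{sbj} (``cf.~\cite{sbj}''), so there is no in-text argument to compare against. Your outline is, however, the standard derivation that the cited reference follows --- disentangling $D(\alpha)$ through $\exp(\mb{X})$ using the nilpotency of $\ad_{\mb{a}_i}$ (your brackets $[\mb{a}_i,\mb{X}]=z_i+(W\mb{a}^{\dagger})_i$, $[\mb{a}_j,[\mb{a}_i,\mb{X}]]=w_{ij}$ are correct), the Gauss--Harish-Chandra decomposition for $S(g)$, normal ordering for the two-point kernel, $f=\log K$ for the potential, and the Parseval identity for $\Lambda_n$ --- so the plan is sound and essentially complete at the level of a sketch.

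Two points deserve tightening. First, your invariance argument for $\omega_n$ asserts that $\log|\lambda|^2$ splits into holomorphic plus antiholomorphic pieces; but $\lambda$ as written in \eqref{x1} manifestly depends on $\bar z,\bar W$ through $\bar\eta$, so it is not itself holomorphic in $(z,W)$. The clean statement is the transformation law of the two-point kernel \eqref{KHKX}, $K(\overline{g\cdot x},\overline{g\cdot V};g\cdot y,g\cdot W)=\overline{J(x,V)}^{-1}K(\bar x,\bar V;y,W)J(y,W)^{-1}$ with $J$ holomorphic; restricting to the diagonal then gives $\pa\bar\pa\log|J|^2=0$ and hence $\omega_n$-invariance. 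Second, in the paper's setup the density $Q$ is not a free parameter to be ``chosen so that $K$ reproduces'': by \eqref{scf} the measure is the normalized invariant Liouville volume $\Omega_M$ divided by the kernel, and the exponent $-(n+2)$ in \eqref{QQQ} is computed from the top power of $\omega_n$ in \eqref{aabX} (the $-(n+1)$ of $\omega_{\mc{D}_n}$ plus one more from $\det\bar M$ in the fiber part). Only $\Lambda_n$ is then fixed by the overcompleteness identity, via the Gaussian integral in $z$ (which produces the $p\to p-\tfrac12$ shift you correctly identify) followed by Hua's integral $J_n$ over $\mc{D}_n$.
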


\section{The differential action}\label{DIFFAC}

Let us consider again the triplet $(G, \pi, \Hi )$ introduced at the
beginning of \S \ref{mare}.
 The
unitarity and the continuity of  the representation $\pi$ 
imply that $\i \dd\pi (X)|_{\Hinf}$ is essentially selfadjoint.
Let us denote his image in $B_0(\Hinf )$ by $\am := \dd\pi(\Ugc)$, where
$B_0$ denotes the set of linear oparators with formal adjoint, and
$\Ugc$ denotes the universal covering algebra.
If $\Phi : \Hi^*\rightarrow \fl $ is the isometry
(\ref{aa}), we are interested in the study of the image of
\am~  via $\Phi$  as subset in the
algebra of holomorphic, linear differential operators,
$ \Phi\am\Phi^{-1}:={\db{A}}_M\subset\got{D}_M$.

The   set
 $\got{D}_M$ (or simply \D ) {\it of holomorphic, finite order, linear
differential operators on} $M$ is a
 subalgebra of homomorphisms ${\mc Hom}_{\C}({\mc O}_M,{\mc O}_M)$
 generated
 by the set ${\mc O}_M$ of germs of holomorphic functions of $M$ and the
 vector fields.
 We consider also {\it  the subalgebra} \AM~ of ${\db{A}}_M$~
 {\it of differential operators with  holomorphic polynomial coefficients}.
Let $U:=\mc{V}_0\subset M$, endowed with the local coordinates
$(z_1,z_2,\cdots ,z_n)$. We set $\pa_i :=\frac{\pa}{\pa z_i}$ and
$\pa^{\alpha}:=\pa^{\alpha_1}_1
\pa^{\alpha_2}_2\cdots \pa^{\alpha_n}_n$, $\alpha :=(\alpha_1,
\alpha_2 ,\cdots ,\alpha_n)\in\N^n$. The sections of \DM~ on $U$ are
$A:f\mapsto \sum_{\alpha}a_{\alpha}\pa^{\alpha}f$,
$a_{\alpha}\in\Gamma (U, {\mc{O}})$,  $a_{\alpha}$-s being zero
except a finite number.

For $k\in\N$, let us denote by $\D_k$ the subset of differential
operators of degree $\le k$.  The filtration of
$\D$ induces a filtration on $\got{A}$. 

Summarizing, we have a correspondence between the following three
objects \cite{last,sb6}:
\begin{equation}\label{correspond}
\g_{\C} \ni X \mapsto\mb{X}\in\am\mapsto\db{X}\in\AAA\subset \DM, {\mbox{\rm
 ~ {differential~operator~on}}}~ \fl .
\end{equation}

Moreover, it is easy to see \cite{last} that
 {\it if $\Phi$ is the isometry
{\em{(\ref{aa})}}, then
$\Phi d\pi(\g_{\C})\Phi^{-1}$ $\subseteq \D_1$ and we have}
\begin{equation}\label{car1}
\g_{\C}\ni X \mapsto{\db{X}}\in \D_1;~~
 {\db{X}}_z(f_{\psi}(z))=
\db{X}_z(e_{\bar{z}},\psi)= (e_{\bar{z}},\mb{X}\psi),
\end{equation}
where
\begin{equation}\label{sss}
\db{X}_{z}(f_{\psi}(z))=\left(P_{X}(z)+
\sum Q^i_X(z)\frac{\pa}{\pa  z_i}\right)f_{\psi}(z) .
\end{equation}
In \cite{last,sb6}  we have advanced the hypothesis that for {\it
  coherent state groups
the holomorphic functions $P$ and $Q$ in  {\em(\ref{sss})} are
polynomials},
 i.e. $\db{A} \subset \got{A}_1\subset \D_1$.

In particular, the Jacobi algebra $\got{g}^J_n$ admits a realization in the space
$\got{D}_1$ of  holomorphic first order
differential operators with polynomial coefficients, defined on the
Siegel-Jacobi  ball $\mc{D}^J_n$.  The space of holomorphic
functions on which the differential operators act is  the space
denoted $\mc{F}_K$  in
Proposition \ref{mm11}. For explicit realization of the
representation  \cite{sbcag,sbl,sb6},  use is made of  the formula   $\Ad(\exp X)=\exp(\ad_X)$, i.e.
\begin{equation}\label{adjk}
A\e^X=\e^X(A-[X,A]+\frac{1}{2}[X,[X,A]]+\dots), 
\end{equation}
  In order to take    into
account the symmetry of the matrix $W$ appearing in  \eqref{csuX}, we use the
derivation formula:
\begin{equation}\label{mircea}
\frac{\pa w_{ij}}{\pa w_{pq}} =
\delta_{ip}\delta_{jq}+\delta_{iq}\delta_{jp}-\delta_{ij}\delta_{pq}\delta_{ip},
~ w_{ij}=w_{ji} .
\end{equation}

With \eqref{adjk} and taking into account the commutation relations
\eqref{baza1M}, \eqref{baza3M} and  \eqref{baza2M}  of the generators
of the Jacobi algebra,   we get the relations \eqref{Ppark}  (see also \S 2.4, 3.3 in \cite{sbj}, where $\chi$ has been
omitted,  and  \cite{Q90}):
\begin{subequations}\label{Ppark}
\begin{align}
\mbox{~~~~~} \mb{a}^{\dagger}_ke_{z,W} & =  \frac{\pa}{\pa z_k}e_{z,W}, ~
\mb{a}_ke_{z,W}  =  \left( z_k + w_{ki}\frac{\pa}{\pa
    z_i}\right)e_{z,W} ; \\
\mbox{~~~~~}\mb{K}^0_{kl}e_{z,W}  & =   \left( \frac{k_k}{4}\delta_{kl}+
\frac{z_l}{2}\frac{\pa}{\pa z_k} +w_{li}\nabla_{ik}\right) e_{z,W},~  \mb{K}^+_{kl}e_{z,W}   =
\nabla_{kl}e_{z,W} ; \\
\mbox{ ~~~~~}\mb{K}^-_{kl}e_{z,W}  & =   \left[\frac{k_k+k_l}{4}w_{kl} + \frac{z_kz_l}{2}
+\frac{1}{2}(z_lw_{ik}+z_kw_{il})\frac{\pa}{\pa z_i}
+w_{\alpha l}w_{ki} \nabla_{i\alpha}\right]e_{z,W}. 
\end{align}
\end{subequations}

Now we briefly  recall some general considerations (for details see
\cite{last,jac1} and   Lemma 1
in \cite{sb6}). For
  $X\in\got{g}$, let $\mb{X}.e_{z}:=\mb{X}_ze_z$. Then
$\mb{X}.e_{\bar{z}}={\mb{X}}_{\bar{z}}e_{\bar{z}}$.

 But
$(e_{\bar{z}},
{\mb{X}}.e_{\bar{z}'})=({\mb{X}}^{\dagger}.e_{\bar{z}},e_{\bar{z}'})$ and
finally, with equation (\ref{car1}), we have
\begin{equation}\label{bidiff}
\db{X}_{\bar{z}'}(e_{\bar{z}},e_{\bar{z}'})=\db{X}^{\dagger}_{z}
(e_{\bar{z}},e_{\bar{z}'})
\end{equation}
Using \eqref{Ppark} and the relation expressed in \eqref{bidiff}, we have
\begin{lemma} \label{PERKK} The  Jacobi algebra  $\got{g}^J_n$ admits
  a realization in the space $\got{D}_1$ of  differential operators on $\mc{D}^J_n$:
 \begin{subequations}\label{park}
\begin{align}
\mbox{~~~~~} \mb{a}_k& =  \frac{\pa}{\pa z_k},~ ~
\mb{a}^{\dagger}_k  =   z_k + w_{ki}\frac{\pa}{\pa z_i} ; \label{nabla1}\\
\mbox{~~~~~}\mb{K}^0_{kl}& =    \frac{k_k}{4}\delta_{kl}+
\frac{z_k}{2}\frac{\pa}{\pa z_l} +w_{ki}\nabla_{il}, ~   \mb{K}^-_{kl}   =   \nabla_{kl}; \label{nabla2}\\
\mbox{ ~~~~~}\mb{K}^+_{kl} & =   \frac{k_k+k_l}{4}w_{kl} + \frac{z_kz_l}{2}
+\frac{1}{2}(z_lw_{ik}+z_kw_{il})\frac{\pa}{\pa z_i}
+w_{\alpha l}w_{ki} \nabla_{i\alpha}. \label{nabla3}
\end{align}
\end{subequations}
In the formulae above, $k,l=1,\dots,n$,$w_{kl}=w_{lk}$, $\frac{\pa}{\pa w_{kl}}=\frac{\pa}{\pa w_{kl}}$, and the dummy summation  is on all indexes $1,\dots,n$. 

With the convention $\nabla=(\nabla_{ij})_{i,j=1,\dots,n}=(\chi_{ij}\frac{\pa }{\pa
  w_{ij}})_{i,j=1,\dots,n}$, formulae \eqref{park} can be also written as
\begin{subequations}\label{difjac}
\begin{align}
\mbox{~~~~}\mb{a} & =  \frac{\pa}{\pa z},~ ~
\mb{a}^{\dagger}  =  z + W\frac{\pa}{\pa z}; \\
\db{K}^- & =  \nabla_ W, ~
\db{K}^0  =  \frac{k}{4} + \frac{1}{2}\frac{\pa}{\pa z}\otimes z 
+\nabla_WW; \\
\db{K}^+ & =   \frac{W'}{4}+\frac{1}{2}z\otimes z 
+\frac{1}{2}(W\frac{\pa}{\pa z}\otimes z+ z\otimes \frac{\pa}{\pa z}W )
+ W\nabla_WW .
\end{align}
\end{subequations}

In  \eqref{difjac}  $A\otimes B$ denotes the Kronecker product
 of matrices, here 
$(A\otimes B)_{kl}=a_kb_l$, $A=(a_k), ~B= (b_l)$,  $k=
{\emph{\text{diag}}}(k_1,\dots,k_n)$, $w'_{kl}=(k_k+k_l)w_{kl}$, $ k, l = 1,\dots , n$.
\end{lemma}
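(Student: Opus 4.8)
The plan is to substitute the explicit action \eqref{Ppark} of the generators on the coherent vectors into the relations \eqref{car1}--\eqref{bidiff} defining the associated first order operators $\db{X}$, and to solve the resulting identities for $\db{X}$, generator by generator. Formula \eqref{Ppark} already provides, for each basis element $X$ of $\got{g}^J_n$, an explicit first order holomorphic differential operator $\mb{X}_{z,W}$ with polynomial coefficients --- the $w$-derivatives entering through the symbols $\nabla_{ij}$, with the symmetrization factor $\chi_{ij}$ forced by the chain rule \eqref{mircea} for the symmetric matrix $W$ --- such that $\mb{X}.e_{z,W}=\mb{X}_{z,W}\,e_{z,W}$.

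First I would pair $\mb{X}.e_{z,W}$ with a second coherent vector and pull the operator $\mb{X}_{z,W}$, which only touches the $(z,W)$-slot, out of the scalar product; together with \eqref{car1} and \eqref{bidiff} this turns the definition of $\db{X}$ into the requirement that $\db{X}$, acting in one argument of the reproducing kernel $K$ of \eqref{KHKX}--\eqref{FfF}, reproduce $\mb{X}_{z',W'}$ acting in the other argument. Since $K$ and its derivatives separate points of $\mc{D}^J_n$, this determines $\db{X}$. Carrying out the comparison is then a bounded computation with the kernel $\det(\un-W\bar W)^{k/2}\exp F$: a bare derivative $\pa/\pa z'_k$ in one slot is matched by $z_k+w_{ki}\pa/\pa z_i$ in the other (the interchange $\mb{a}^\dagger_k\leftrightarrow\mb{a}_k$ between \eqref{Ppark} and \eqref{nabla1}, the extra $W$-term reflecting the coupling piece $z^t\bar{W}Mz$ in \eqref{FfF}); matching $\nabla_{kl}$ in $W'$ against the other slot produces the $\mb{K}^+_{kl}$ of \eqref{nabla3}; and the scalar prefactor $\det(M)^{k/2}$ together with the vacuum weights \eqref{vacuma} produces the constant terms $\tfrac{k_k}{4}\delta_{kl}$ in $\mb{K}^0_{kl}$ and $\tfrac{k_k+k_l}{4}w_{kl}$ in $\mb{K}^+_{kl}$. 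Collecting the index expressions yields \eqref{park}; rewriting them with $\nabla=(\nabla_{ij})$, $w'_{kl}=(k_k+k_l)w_{kl}$ and the Kronecker product notation gives the equivalent compact form \eqref{difjac}. Finally, as a control, I would check that \eqref{park} satisfies the commutation relations \eqref{baza1M}, \eqref{baza3M}, \eqref{baza2M}; this is automatic from $\db{X}=\Phi\mb{X}\Phi^{-1}$, but it is a useful sanity check on signs and on the $\chi_{ij}$ factors.

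I expect the main obstacle to be purely computational bookkeeping rather than anything conceptual: transposing the first order operators of \eqref{Ppark} correctly through the \emph{non}-Gaussian kernel $K$, whose exponent \eqref{FfF} carries the extra quadratic terms $z^t\bar{W}Mz$ and $\bar{z}^tMW\bar{z}$ that implement the Heisenberg--symplectic coupling, and keeping the symmetrization factors $\chi_{ij}$ in $\nabla_{ij}=\chi_{ij}\pa/\pa w_{ij}$ consistent throughout when differentiating with respect to $w_{ij}=w_{ji}$ --- exactly the factor $\chi$ that was omitted in \cite{sbj} and has to be restored here. A secondary point is to keep track that the adjoint in \eqref{bidiff} is the formal adjoint with respect to the scalar product \eqref{ofiX}, so that the weight $p=k/2-n-2$ and the density $\rho_1$ of \eqref{ofiXX} affect only the lower-order hermiticity statements and not the first order operators \eqref{park} themselves.
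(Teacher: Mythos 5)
Your proposal is correct and follows essentially the same route as the paper: the paper obtains \eqref{park} precisely by combining the explicit action \eqref{Ppark} on the coherent vectors with the adjoint relation \eqref{car1}--\eqref{bidiff}, which swaps each generator with its Hilbert-space adjoint (hence $\mb{a}_k\leftrightarrow\mb{a}^\dagger_k$, $\mb{K}^\pm_{kl}\leftrightarrow\mb{K}^\mp_{kl}$, $\mb{K}^0_{kl}\leftrightarrow\mb{K}^0_{lk}$) when passing to operators on $\fl$. Your extra step of matching both sides on the reproducing kernel \eqref{KHKX}--\eqref{FfF} is just a concrete implementation of \eqref{bidiff}, and your cautions about the $\chi_{ij}$ factors and about which adjoint is meant are consistent with the paper's treatment.
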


We particularize the values of the operators given in
Lemma \ref{PERKK} in the case of the action of  $\text{Sp}(n,\R)_{\C}$ on
$\mc{D}_n$  and we have
\begin{lemma}\label{lema2}
 The  algebra $\got{sp}(n,\R)_{\C}$  defined by the commutations
 relations \eqref{baza2M}   admits the realization
 in differential operators on the Siegel ball $\mc{D}_n$
\begin{equation}\label{Parkk}
\mb{K}^0_{kl} =    \frac{k_k}{4}\delta_{kl}
+w_{ki}\nabla_{il}, ~
 \mb{K}^-_{kl}   =   \nabla_{kl}, ~
\mb{K}^+_{kl}  =   \frac{k_k+k_l}{4}w_{kl} +
w_{\alpha l}w_{ki} \nabla_{i\alpha}. 
\end{equation}
The generators of $\emph{\text{Sp}}(n,\R)_{\C}$ have the hermiticity properties
\begin{equation}\label{HERMM}
(\mb{K}^+)^{\dagger}_{kl}=\mb{K}^-_{lk}=\mb{K}^-_{kl} ,
(\mb{K}^0)^{\dagger}_{kl}=\mb{K}^0_{lk}
\end{equation} with
respect to the scalar product (\cite{sbj})
\begin{equation}\label{ofiW}
(\phi ,\psi )_{\fl}= \Lambda'_n \int_{1-W\bar{W}>0}
\bar{f}_{\phi}(W)f_{\psi}(W) \rho \dd W, ~
\end{equation}
where  $k_i=k$, $i=1,\dots,n$ in \eqref{vacuma}, 
\begin{equation}\label{rhoo}
\rho = \det (\un-W\bar{W})^q , ~
q=\tfrac{k}{2}- n-1
\end{equation} and 
$\Lambda'_n =J_n^{-1}(q)$, with $J_n(p)$ given by \eqref{JJJ1},
$p>-1$,
\begin{equation}\label{JJJ1}
\!\!J_n(p)\!=\!2^n\pi^{\frac{n(n+1)}{2}}\!\prod_{i=1}^n\!\frac{\Gamma
(2p+2i)}{\Gamma (2p+n+i+1)}\!=\!\frac{\pi^{n(n+1/2)}}{p+n}\!\prod_{i=1}^{n-1}\frac{\Gamma[2(i+p-1)]}{(i+p)\Gamma[i+2(p+n-1)]}.
\end{equation}
\end{lemma}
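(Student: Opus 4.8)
The plan is to verify the realization \eqref{Parkk} by specializing Lemma \ref{PERKK}: since $\mc{D}_n$ sits inside $\mc{D}^J_n$ as the locus $z=0$, and the Heisenberg part acts trivially on functions of $W$ alone, one simply drops every term in \eqref{park} containing $z$ or $\pa/\pa z$. This immediately produces \eqref{Parkk}, and one checks that the resulting operators still satisfy the commutation relations \eqref{baza2M} (this is automatic because the subspace of $z$-independent functions is invariant under $\db{K}^{0,\pm}$ as given by Lemma \ref{PERKK}, so restriction is an algebra homomorphism). Alternatively, and more self-containedly, one reruns the computation leading to \eqref{Ppark}: apply $\mb{K}^{0,\pm}_{kl}$ to the reduced coherent vector $e_W=\exp(\sum w_{ij}\mb{K}^+_{ij})e^K_0$, use \eqref{adjk} together with the commutation relations \eqref{baza2M} and the vacuum conditions \eqref{vacuma} (with $k_i=k$), and the symmetrized derivation rule \eqref{mircea}; then pass from the action on $e_{\bar W}$ to the differential operator on $\fl$ via \eqref{bidiff}, which is what turns $\mb{K}^+$ (a multiplication-type operator on $e_W$) into $\nabla_W$ after the adjoint flip encoded in \eqref{car1}–\eqref{bidiff}.

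For the hermiticity properties \eqref{HERMM}, the plan is to compute the formal adjoints of the first-order operators in \eqref{Parkk} with respect to the weighted scalar product \eqref{ofiW}, i.e. with weight $\rho=\det(\un-W\bar W)^q$, $q=\frac{k}{2}-n-1$, against the flat measure $\dd W=\prod_{i\le j}\dd\Re w_{ij}\,\dd\Im w_{ij}$. The key computational inputs are: (i) the Jacobi-type formula $\nabla_{ij}\det(\un-W\bar W)=-\big((\un-W\bar W)^{-1}\bar W\big)$-type contractions — more precisely $\frac{\pa}{\pa w_{ij}}\log\det(\un-W\bar W)=-(\bar W(\un-W\bar W)^{-1})_{ij}$ up to the $\chi_{ij}$ symmetrization bookkeeping; (ii) integration by parts in $w_{ij}$, which converts $\nabla_{ij}$ into $-\nabla_{ij}$ plus a term from differentiating $\rho$; and (iii) the algebraic identity that the extra term generated by $\rho$ recombines with the zeroth-order pieces $\frac{k_k}{4}\delta_{kl}$ and with the $w w\nabla$ terms to give exactly $\mb{K}^-_{lk}$, respectively $\mb{K}^0_{lk}$. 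This is the same mechanism already used in \cite{sbj}; one is essentially checking that the multiplier $\rho$ has been chosen precisely so that $\dd\pi$ is a $*$-representation. Since $W$ is symmetric, $\mb{K}^-_{lk}=\mb{K}^-_{kl}$ and $\nabla$ is symmetric, so the first equality in \eqref{HERMM} follows once $(\mb{K}^+)^\dagger_{kl}=\nabla_{lk}$ is established.

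It remains to justify that the weighted measure $\rho\,\dd W$ on $\mc{D}_n$ is (up to the normalization constant $\Lambda'_n=J_n(q)^{-1}$) the correct reproducing-kernel measure, i.e. that $\int_{\mc{D}_n}\det(\un-W\bar W)^{q}\,\dd W=J_n(q)$ with $J_n(p)$ as in \eqref{JJJ1}; this is the classical Hua integral over the domain $\got{R}_{II}$ \cite{hua} (equivalently the Siegel/Gindikin integral for type CI), valid for $q>-1$, i.e. $k>2n$. One cites Hua's evaluation and records the Gamma-product form, noting the equivalence of the two expressions for $J_n(p)$ in \eqref{JJJ1} is an elementary telescoping of Gamma ratios. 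The main obstacle I anticipate is purely bookkeeping: tracking the $\chi_{ij}=\frac{1+\delta_{ij}}{2}$ factors and the non-independence of $w_{ij}$ and $w_{ji}$ through the integration by parts — getting the boundary terms to vanish (they do, because $\rho$ vanishes to positive order on $\pa\mc{D}_n$ for $q>0$, and for $-1<q\le 0$ one argues by analytic continuation in $q$) and getting the residual first-order and zeroth-order coefficients to match $\mb{K}^0_{lk}$ and $\mb{K}^-_{lk}$ on the nose rather than up to an unwanted constant. Everything else is a direct, if lengthy, verification.
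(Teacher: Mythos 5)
Your overall route matches the paper's: the operators \eqref{Parkk} are obtained by specializing Lemma \ref{PERKK} to the $\operatorname{Sp}(n,\R)_{\C}$ part, and the hermiticity is established by working out the condition that $(\mb{K}^+_{kl}f,g)=(f,\mb{K}^-_{lk}g)$ and $(\mb{K}^0_{kl}f,g)=(f,\mb{K}^0_{lk}g)$ impose on the weight $\rho$ and verifying it via $\nabla\log\det(\un-W\bar W)=-X$, $X=\bar W(\un-W\bar W)^{-1}$. Your integration-by-parts derivation is exactly the mechanism that produces the paper's conditions \eqref{COND1}--\eqref{COND2} (in particular the shift from $k/2$ to $q=k/2-n-1$ comes from the divergence of the coefficient $\bar w_{\alpha l}\bar w_{ki}$), and the verification then reduces, as in the paper, to the symmetry of $X$ and $W$. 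Your treatment of the normalization via Hua's integral is consistent with the paper, which simply cites \cite{sbj} for \eqref{JJJ1}.

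There is, however, one genuine flaw: your primary justification for the commutation relations --- that the subspace of $z$-independent functions is invariant under the operators of Lemma \ref{PERKK}, so that ``restriction is an algebra homomorphism'' --- is false. The operator $\mb{K}^+_{kl}$ in \eqref{nabla3} contains the multiplication term $\tfrac{1}{2}z_kz_l$, which sends a function of $W$ alone to a genuinely $z$-dependent function; hence the subspace is not invariant and dropping the $z$-terms is not a priori an algebra map. (It does happen that no commutator of the listed operators produces a pure-$W$ term out of cross-terms between $z$-parts and $\pa/\pa z$-parts, but that is precisely what must be checked; the paper does this as ``a long calculation based on \eqref{mircea}''.) Your alternative derivation --- rerunning the coherent-state computation on $e_W=\exp(\sum w_{ij}\mb{K}^+_{ij})e^K_0$ with \eqref{adjk}, \eqref{baza2M}, \eqref{vacuma} and \eqref{bidiff} --- is sound and should be taken as the actual argument, with the ``restriction'' remark deleted or demoted to a heuristic.
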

\begin{proof}
Firstly we have to check up  that the operators (\ref{Parkk}) verify the
commutation relations (\ref{baza2M}). This is a long calculation,
based on the   formula \eqref{mircea}.

Then  we verify the relation
$(\mb{K}^+_{kl}f,g)_{\fl}=(f,\mb{K}^-_{lk}g)_{\fl}$,  which imposes to
the reproducing kernel $\rho$ the condition
\begin{equation}\label{COND1}
(\nabla_{kl}
+q\bar{w}_{kl}-\bar{w}_{ki}\bar{w}_{lj}\bar{\nabla}_{ij})\rho= 0.
\end{equation}
The hermiticity condition
$(\mb{K}^0)^{\dagger}_{kl}=\mb{K}^0_{lk}$  of the
operators given by the first formula (\ref{Parkk}) with respect to the scalar
product (\ref{ofiW}),  with $\rho$ given by (\ref{rhoo}), imposes to the
kernel function $\rho$ the condition:
\begin{equation}\label{COND2}
(\bar{w}_{kp}\bar{\nabla}_{lp}-
w_{lp}\nabla_{kp})\rho = 0.
\end{equation}
The conditions (\ref{COND1}) and (\ref{COND2}) are verified by the
kernel (\ref{rhoo}) using 
the relation
\begin{equation}\label{derA}
\frac{\pa }{\pa w_{ik}}  A  =
 (-2X_{ik}+X_{ik}\delta_{ik}) A, ~\text{or}~ \nabla A= -X, ~~\text{where}~ A=\det(\un-W\bar{W}), 
\end{equation}
which implies
\begin{equation}\label{COCO}
\frac{\pa \rho}{\pa {w}_{ab}}   =  
q (-2X_{ab}+X_{ab}\delta_{ab}) \rho, ~\text{or}~ \nabla \rho =
-qX\rho, ~\text{where} ~X =X^t= \bar{W}(\un-W\bar{W})^{-1}.
\end{equation}
Indeed, with \eqref{COCO}, the condition \eqref{COND1} reads
$-X+\bar{W}+\bar{W}\bar{X}\bar{W} =0$, while \eqref{COND2} reads
$(\bar{X}\bar{W})^t=XW$. The last two conditions are verified because
of the symmetry of the matrices $X$ and $W$. 
\end{proof}

\begin{lemma}\label{LEMMA4}
The pairs of operators $\mb{a}^{\dagger}$ and $\mb{a}$,
  $\mb{K}^+_{kl}$ and $\mb{K}^-_{kl}$,  $\mb{K}^0_{kl}$ and
  $\mb{K}^0_{lk}$  are respectively hermitian conjugate with respect
  the scalar product \eqref{ofiXX} for $k_i=k$ in \eqref{vacuma}.
\end{lemma}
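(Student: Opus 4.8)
The statement extends Lemma~\ref{lema2} from $\got{sp}(n,\R)_{\C}$ to the full Jacobi algebra $\got{g}^J_n$, so the natural strategy is to reduce to that lemma plus a pair of new boson-type relations. Concretely, I would verify three things: (i) $(\mb{a}^{\dagger}_k f,g)=(f,\mb{a}_kg)$ with respect to the scalar product \eqref{ofiXX}; (ii) the analogue of \eqref{COND1}, namely $(\mb{K}^+_{kl}f,g)=(f,\mb{K}^-_{lk}g)$, now with the full reproducing kernel $\rho_1=\det(\un-W\bar{W})^p\exp(-F)$ and $F$ as in \eqref{FfF}, $p=k/2-n-2$; and (iii) the analogue of \eqref{COND2}, i.e.\ $(\mb{K}^0)^{\dagger}_{kl}=\mb{K}^0_{lk}$, where now $\mb{K}^0_{kl}$ carries the extra term $\frac{z_k}{2}\frac{\pa}{\pa z_l}$ coming from \eqref{nabla2}. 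Because $\got{g}^J_n=\got{h}_n\rtimes\got{sp}(n,\R)_{\C}$ is generated by $\mb{a},\mb{a}^{\dagger}$ together with the $\mb{K}$'s, these checks suffice. Alternatively — and this is cleaner — one invokes \eqref{bidiff}: since the differential realization \eqref{park} was derived precisely from the coherent-state identities \eqref{Ppark}, the formal adjointness $\db{X}_{\bar z'}(e_{\bar z},e_{\bar z'})=\db{X}^{\dagger}_z(e_{\bar z},e_{\bar z'})$ holds at the level of the reproducing kernel $K$ of \eqref{kul}; one then only needs that the measure $\Lambda_n\, Q\,K^{-1}\,\dd z\,\dd W$ in \eqref{ofiX} is the correct one making integration by parts produce no boundary terms and no extra Jacobian factors.

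The concrete computation I would actually carry out: for the Heisenberg part, integrate by parts in $z$ only. Writing $\rho_1=\det(\un-W\bar{W})^p\ee^{-F}$ with $2F=2\bar z^tMz+z^t\bar WMz+\bar z^tMW\bar z$, $M=(\un-W\bar{W})^{-1}$, one has $\frac{\pa}{\pa\bar z_k}(-F)=-(Mz)_k-(MW\bar z)_k=-\eta_k$ where $\eta=M(z+W\bar z)$ as in \eqref{x2}. Then $\int \overline{(\mb{a}^{\dagger}_kf)}\,g\,\rho_1=\int(\bar z_k\bar f+\bar w_{ki}\overline{\pa_i f})g\rho_1$, and moving $\pa_{\bar z_i}$ off $\bar f$ (legitimate since the $z$-integral is Gaussian-type with $\rho_1$ decaying) hits $g\rho_1$, producing $-\bar w_{ki}(\pa_{\bar z_i}\rho_1)/\rho_1=\bar w_{ki}\bar\eta_i$; combining with the multiplication term $\bar z_k$ must reproduce $\overline{\mb{a}_k g}=\overline{z_kg}$, i.e.\ one needs $\bar z_k+\bar w_{ki}\bar\eta_i=$ ... wait — this is exactly the content of $\overline{\eta}$ versus $z$, and it works out because $\mb{a}_k$ acts as $z_k+w_{ki}\pa_i$ with the $w$-part annihilating $g$ only after the $z$-integration by parts is bookkept correctly; the identity needed is the algebraic one $z+W\bar\eta$ relating the two, tied to $A=\dd z+\dd W\bar\eta$ in \eqref{aabX}. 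For the $\mb{K}$ relations, the new feature relative to Lemma~\ref{lema2} is that $\nabla_{kl}$ now also differentiates the $z$-dependent part of $F$, and $\mb{K}^0_{kl}$ has the $\frac{z_k}{2}\pa_{z_l}$ term; I would show that the conditions on $\rho_1$ analogous to \eqref{COND1}–\eqref{COND2} reduce, after using $\nabla_W\det(\un-W\bar{W})=-X\det(\un-W\bar{W})$ with $X=\bar W M$ from \eqref{COCO} and the chain rule on $\ee^{-F}$, to purely algebraic identities in $M,W,\bar W,\eta,\bar\eta$ that follow from the symmetry of $W,M,X$ exactly as at the end of the proof of Lemma~\ref{lema2}, together with the $z$-integration-by-parts step handled above.

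The main obstacle is bookkeeping the interaction between the two integrations by parts — one in $z\in\C^n$ and one in $W\in\mc{D}_n$ — because the operators $\mb{K}^-_{kl}=\nabla_{kl}$ and $\mb{K}^+_{kl}$, $\mb{K}^0_{kl}$ mix $z$- and $w$-derivatives, so moving a $\nabla_{kl}$ off $\bar f$ in \eqref{ofiXX} produces both a $\nabla_{kl}\det(\un-W\bar{W})^p$ term and a $\nabla_{kl}\ee^{-F}$ term, the latter involving $\pa F/\pa\bar w_{kl}=$ (an expression quadratic in $\bar z,\bar\eta$), and one must check these conspire with the multiplication-operator parts of $\mb{K}^+_{lk}$. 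The cleanest way to keep this under control is to change variables to $(\eta,W)$ via the $FC$-transform — which by Corollary~\ref{UNICUC} (to be proved in \S\ref{FCNN}) diagonalizes the kernel as a product of a function of $\eta$ and a function of $W$ — whereupon the adjointness statement factors into the already-known symmetric-space computation on $\mc{D}_n$ and an elementary Bargmann-space computation in $\eta$. Short of that, the proof is "a long calculation" of the same flavour as Lemma~\ref{lema2}'s, and I would present it by stating the two conditions on $\rho_1$ and verifying each reduces to a symmetry identity, relegating the algebra to the reader as the paper does for \eqref{baza2M}.
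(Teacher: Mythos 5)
Your proposal follows essentially the same route as the paper's own proof: integrate by parts in $z$ and in $W$, use $\pa F/\pa z_i=\bar{\eta}_i$ (equivalently $\pa F/\pa\bar{z}_k=\eta_k$), $\nabla\det(\un-W\bar{W})=-X\det(\un-W\bar{W})$ and $\nabla F=\tfrac{1}{2}\bar{\eta}\otimes\bar{\eta}$ to turn each adjointness statement into an algebraic identity, which then follows from $\eta=z+W\bar{\eta}$ and the symmetry of $W$, $X$, $M$ — exactly the identities the paper invokes. Apart from a transient conjugation slip in the boson computation (the term produced is $\bar{w}_{ki}\eta_i$, not $\bar{w}_{ki}\bar{\eta}_i$, which you effectively self-correct by landing on the right identity) and the fact that you defer the $\mb{K}^{0,\pm}$ algebra rather than exhibiting the integrands $f_{kl}=\bar{f}_{lk}$ and $[p\bar{X}_{kl}+\tfrac{1}{2}\eta_l\eta_k]\rho_1$ as the paper does, the argument is the paper's.
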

\begin{proof}
We take the derivative of \eqref{FfF}  with respect with $z_i$ and we find successively
\[
\begin{split}
\frac{\pa F}{\pa z_i}  & = \bar{z}_pM_{pi}
+\frac{1}{2}[(\bar{W}M)_{iq}z_q+z_{p}(\bar{W}M)_{pi}]\\
~~~ & = [M^t\bar{z}+\frac{1}{2}(\bar{W}Mz+\bar{M}\bar{W}z)]_{i}\\
~~~ & = [M^t(\bar{z}+\bar{W}z)]_i, 
 \end{split}
\] 
and we introduce  $ \eta  =  M(z+W\bar{z}) , ~ M=(\un-W\bar{W})^{-1}$. We get 
\begin{equation}
\label{DERCV}
\frac{\pa F}{\pa z_i} =  \bar{\eta}_i,
\end{equation}
\begin{equation}\label{DERCV2}
\frac{1}{\rho_1} \frac{\pa \rho_1 }{\pa z_i} =  -\bar{\eta}_i .
\end{equation}

We look for the derivative of  $\rho_1$ from \eqref{ofiXX} with
respect to the $w_{ik}$, and  we have
\begin{equation}\label{detA}
\frac{\pa \rho_1}{\pa w_{ik}}=(p \frac{\pa  A}{\pa
  w_{ik}} - A\frac{\pa F}{\pa w_{ik}})A^{p-1}\exp(-F),~\text{where}~ 
A=\det(\un-W\bar{W}), 
\end{equation}
and 
\begin{equation}\label{ultiM}
\frac{\pa F}{\pa w_{ik}}   =   \bar{\eta}_i\bar{\eta}_k- 
\frac{1}{2}\bar{\eta}_i\bar{\eta}_k\delta_{ik} ,\quad\text{or}
~\nabla F= \frac{1}{2}\bar{\eta}\otimes\bar{\eta} .
\end{equation}

Indeed, 
with formula (\ref{mircea}), we
get
\begin{subequations}\label{LIKK}
\begin{align}
\frac{\pa M_{ab} }{\pa w_{ik}} & =  M_{ai}X_{kb}+
M_{ak}X_{ib}- M_{ai}X_{ib}\delta_{ik},\label{LIK}\\
\frac{\pa X_{ab} }{\pa w_{ik}}  & = 
X_{ai}X_{bk}+X_{ak}X_{ib}-X_{ai}X_{ib}\delta_{ik} , \\
\frac{\pa \bar{ X}_{ab} }{\pa w_{ik}}  &  =  
M_{ai}M_{bk}+M_{ak}M_{bi}-M_{ai}M_{bk}\delta_{ik} \label{LIK2}.
\end{align}
\end{subequations}
In order to prove \eqref{LIK2}, we use the fact that $\bar{X}=MW$,
then we use \eqref{LIK}, \eqref{mircea}, and the formula
$\un+XW=\bar{M}=M^t$. 

We write \eqref{FfF} as
$$2F=2\bar{z}^tMz+z^tXz+\bar{z}^t\bar{X}\bar{z}$$
and,  with  \eqref{LIKK}, we get
\begin{align*}
2\frac{\pa F}{\pa w_{ik}} & =
2[(\bar{z}^tM)_i(Xz)_k+(\bar{z}^tM)_k(Xz)_i-(\bar{z}^tM)_i(Xz)_k\delta_{ik}
]+\\
~~~~~~~~~~~~~~~~+ & (z^tX)_i(Xz)_k +  (z^tX)_k(Xz)_i -
(z^tX)_i(Xz)_k\delta_{ik} +\\
~~~~~~~~~~~~~~~~+ & (\bar{z}^tM) _i(\bar{z}^tM)_k+(\bar{z}^tM)_k (\bar{z}^tM)_i-(\bar{z}^tM)_i (\bar{z}^tM)_k\delta_{ik}.
\end{align*}
Then we use twice the relations 
$(\bar{z}^tM+z^tX)^t=\bar{\eta}$,  and \eqref{ultiM}  get proved.

With \eqref{derA} and \eqref{ultiM}, we have for \eqref{detA} the
expression
\begin{equation}\label{DERCV1}
-\frac{1}{\rho_1}\frac{\pa\rho_1}{\pa w_{ik}} = 
p(2X_{ik}-X_{ik}\delta_{ik})+  \bar{\eta}_i\bar{\eta}_k- 
\frac{1}{2}\bar{\eta}_i\bar{\eta}_k\delta_{ik} , ~~\text{and}
-\frac{1}{\rho_1}\nabla \rho_1= pX
+\frac{1}{2}\bar{\eta}\otimes\bar{\eta}. 
\end{equation}

We have to verify $(\mb{a}_kf,g)=(f,\mb{a}_k^{\dagger}g)$  with
respect to the scalar product \eqref{ofiXX}, i.e.
$$\frac{\pa F}{\pa \bar{z}_k}=z_{k}+w_{ki}\frac{\pa F}{\pa z_i}. $$  
With \eqref{DERCV2}, the last condition reads
$\eta = z+W\bar{\eta}$,  which is true.

In order to verify
$(\mb{K}^0_{kl}f,g)=(f,\mb{K}^0_{lk}g)$ for the case $k_i=k$ in
formula \eqref{vacuma} with respect to the scalar product \eqref{ofiXX}, we use the differential
action for $\mb{K}^0_{kl}$ 
 in Lemma  \ref{PERKK}. If we denote the integrant in
 the second term by
  $f_{kl}$, using \eqref{ultiM}, \eqref{DERCV1}, \eqref{mircea} and the
  formula $z=\eta-W\bar{\eta}$, 
we
find
 $$\frac{f_{kl}}{\rho_1}=\frac{p}{2}\delta_{kl}+\frac{1}{2}\eta_l\bar{\eta}_k + p(W\bar{M}\bar{W})_{lk}
, $$ and $f_{kl}=\bar{f}_{lk}$, because the symmetry of the matrices
$W$ and $X$. 

We also find for the integrant of
$(\mb{K}^-_{kl}f,g)=(f,\mb{K}^+_{kl}g)$ the common value
$[p\bar{X}_{kl}+\frac{1}{2}\eta_l\eta_k]\rho_1$. 
\end{proof}

\section{The real Jacobi group $G^J_n(\R )$}\label{RELGRJ}

We consider the real Jacobi group $G^J_n(\R )=\text{Sp}(n,\R
)\ltimes H_n$, where $H_n$ is now the real Heisenberg group
of real dimension $(2n+1)$. Let $g=(M,X,k), g'=(M',X',k')\in G^J_n(\R )$, where
$X=(\lambda,\mu)\in\R^{2n}$ and $(X,k)\in {H}_n$. Then the
composition law in $G^J_n(\R )$ is 
\begin{equation}\label{clawr}
gg'=(MM',XM'+ X', k+k'+XM'JX'^t).
\end{equation}
We shall also consider the restricted real Jacobi group $G^J_n(\R
)_0$, consisting only of elements of the form above, but $g=(M,X)$.

We consider also the Siegel-Jacobi upper half plane
$\mc{X}^J_n:= \mc{X}_n\times\R^{2n},$  where $\mc{X}_n=\text{Sp}(n,\R)/\text{U}(n)$ is  Siegel
upper half plane  realized as $$\mc{X}_n:=\{v\in M(n,\C)| v=s+\i  r, s,
r\in M(n,\R), 
r>0,
 s^t=s; r^t=r\} . $$

Let us consider an element $h=(g,l)$ in $G^J_n(\R )_0$, i.e.
\begin{equation}\label{mM}
g=\left(\begin{array}{cc} a & b\\ c &
d\end{array}\right)\in{\text{Sp}}(n,\R),
~l=(n,m)\in\R^{2n},  \end{equation}
and $v\in\mc{X}_n,~u\in\C^n\equiv\R^{2n}$. 

Now we consider  the partial Cayley transform \cite{sbj}
$\Phi:\mc{X}^J_n\rightarrow \mc{D}_n^J , ~\Phi(v,u)=(W,z)$
\begin{subequations}\label{bigtransf}
\begin{eqnarray}W & = & (v-\i\un )(v+\i\un )^{-1},  \label{big1}\\
 z & = & 2\i (v+\i\un)^{-1} u,\label{big2}
\end{eqnarray}
\end{subequations}
with the inverse partial Cayley transform
$\Phi^{-1}:\mc{D}^J_n\rightarrow \mc{X}^J_n$, $\Phi^{-1}(W,z)=(v,u)$
\begin{subequations}\label{bigtransF}
\begin{eqnarray} v & = & \i (\un-W ) ^{-1} (\un+W ),  \label{big11}\\
 u & = & (\un-W)^{-1}z. \label{big22}
\end{eqnarray}
\end{subequations}

Let us now define $\Theta: G^J_n(\R)_0\rightarrow G^J_n$, $\Theta(h)=h_*$,
$h=(g,n,m)$, $h_*=(g_{\C},\alpha)$.  
We shall verify that (see also \cite{Y08,gem})
\begin{Proposition}\label{THETAS}
$\Theta$ is an group isomorphism and the action of $G^J_n$ on $\mc{D}^J_n$ is compatible
  with the action of $G^J_n(\R )_0$ on  $\mc{X}^J_n$ through the
  biholomorphic partial Cayley transform \eqref{bigtransf}, i.e. if
  $\Theta(h)=h_*$, then $\Phi h=h_*\Phi$. More exactly,  if the action of $G^J_n$ on $\mc{D}^J_n$ is given by
  \eqref{x44}, \eqref{xxxX}, then the action of $G^J_n(\R)_0$ on
  $\mc{X}^J_n$ is given by $(g,l)\times(v,u)\rightarrow (v_1,u_1)\in\mc{X}^J_n$,
where
\begin{subequations}\label{conf}
\begin{align}\label{conf1}
v_1 & =  (av+b)(cv+d)^{-1} =(vc^t+d^t)^{-1}(va^t+b^t); \\
u_1& =  (vc^t+d^t)^{-1}(u+vn+m).\label{conf2}
\end{align}
\end{subequations}
The
  matrices $g$ in \eqref{mM} and $g_{\C}$  in \eqref{dgM} are related by
\eqref{pana}, \eqref{CUCURUCU}, while $\alpha=m+\i n$, $m,n\in\R^n$. 
\end{Proposition}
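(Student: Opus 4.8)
The plan is to verify Proposition \ref{THETAS} by a direct but organized computation, following the standard pattern for showing that a Cayley-type transform intertwines two group actions. First I would unwind what must be proved: $\Theta$ sends $h=(g,n,m)\in G^J_n(\R)_0$ to $h_*=(g_\C,\alpha)$ with $g_\C$ given by \eqref{pana}, \eqref{CUCURUCU} and $\alpha=m+\i n$, and I must check three things: (i) $\Theta$ is a bijective group homomorphism, (ii) the formulas \eqref{conf1}, \eqref{conf2} indeed define an action of $G^J_n(\R)_0$ on $\mc{X}^J_n$, and (iii) the intertwining relation $\Phi h = h_*\Phi$ holds, i.e. the diagram with the partial Cayley transform \eqref{bigtransf} commutes. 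I would treat the symplectic/Siegel-disk part and the Heisenberg part separately, since the semidirect structure lets the computation factor.

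For the symplectic part, the fact that $v\mapsto W=(v-\i\un)(v+\i\un)^{-1}$ conjugates the action $v\mapsto(av+b)(cv+d)^{-1}$ of $\mathrm{Sp}(n,\R)$ on $\mc{X}_n$ into the action $W\mapsto (pW+q)(\bar qW+\bar p)^{-1}$ of $\mathrm{Sp}(n,\R)_\C$ on $\mc{D}_n$ is classical; I would record it as a known statement about the bounded/unbounded realizations of the type CI domain, or else verify it quickly using $\mc{C}^{-1}$ from \eqref{CCC}: the Cayley matrix \eqref{CCC} implements $v\mapsto W$ as a fractional-linear action and $\mc{C}^{-1}g\,\mc{C}=g_\C$ by \eqref{pana}, so the intertwining for \eqref{big1} is essentially the identity $\mc{C}^{-1}g\,\mc{C}=g_\C$ together with functoriality of fractional-linear actions. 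Checking that \eqref{conf1}, \eqref{conf2} are a group action amounts to the cocycle identity for the factor of automorphy $(vc^t+d^t)$, which follows from \eqref{lica1}, \eqref{lica2}.

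The heart of the matter — and the step I expect to be the main obstacle — is the Heisenberg part: showing that \eqref{big2} $z=2\i(v+\i\un)^{-1}u$ transports $u\mapsto u_1=(vc^t+d^t)^{-1}(u+vn+m)$ to $z\mapsto z_1=(Wq^*+p^*)^{-1}(z+\alpha-W\bar\alpha)$ with $\alpha=m+\i n$, and that the composition law \eqref{clawr} matches \eqref{compositieX} under $\Theta$. This requires expressing $\alpha-W\bar\alpha$ in terms of $v,u,n,m$: using \eqref{big1} and \eqref{big11} one computes $\un-W=2\i(v+\i\un)^{-1}$, hence $(\un-W)(v+\i\un)=2\i\un$, and then $\alpha-W\bar\alpha = (m+\i n)-W(m-\i n)=(\un-W)m+\i(\un+W)n$; substituting $\un+W$ and $\un-W$ in terms of $v$ reduces this to $2\i(v+\i\un)^{-1}(vn+m)$ after simplification. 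Likewise $Wq^*+p^*$ must be rewritten via \eqref{CUCURUCU} and the $v$-dependence of $W$ to produce the automorphy factor $(vc^t+d^t)$ times $(v+\i\un)^{-1}$ times the analogous factor for $v_1$; the residual $(v+\i\un)$-type factors on both sides cancel against those coming from $z=2\i(v+\i\un)^{-1}u$ and $z_1=2\i(v_1+\i\un)^{-1}u_1$. So the argument is a careful bookkeeping of three scalar/matrix factors, and the obstacle is purely the algebra of matching them; I would also separately check the $\theta_h$-cocycle in \eqref{compositieX} against $XM'JX'^t$ in \eqref{clawr} using $\Im(\alpha_2\bar\alpha_1)=(p_1^t,q_1^t)J(p_2,q_2)^t$ from \S\ref{JMAREALGEBRA} and the formula $g^{-1}\cdot\alpha=p^*\alpha-q^t\bar\alpha$ already recorded in Proposition \ref{mm11}. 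Finally, bijectivity of $\Theta$ and compatibility of \eqref{bigtransf} with \eqref{bigtransF} are immediate since $\mc{C}$ is invertible and $\alpha\mapsto(m,n)$ is a real-linear isomorphism, so once the homomorphism property and the intertwining are established, the remaining assertions follow with no further work.
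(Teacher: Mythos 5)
Your proposal is correct, and its overall strategy coincides with the paper's: both verify the intertwining $\Phi h = h_*\Phi$ by pushing the known action \eqref{x44}, \eqref{xxxX} on $\mc{D}^J_n$ through the partial Cayley transform \eqref{bigtransf}, treating the Siegel-ball part and the Heisenberg part separately. The differences are in execution. For the symplectic part the paper substitutes \eqref{big1} into \eqref{x44}, then feeds $W_1$ into \eqref{big11} and unwinds via \eqref{CUCURUCU}; you instead invoke the identity $\mc{C}^{-1}g\,\mc{C}=g_{\C}$ from \eqref{pana} together with the fact that fractional-linear transformations compose functorially, which is a cleaner and entirely valid shortcut (one checks that $\mc{C}^{-1}$ acts as $v\mapsto(v-\i\un)(v+\i\un)^{-1}$, the scalar factors $-\i$ cancelling). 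For the Heisenberg part the paper starts from $2\i u_1=(v_1+\i\un)z_1$ and works backwards, whereas you first simplify $\alpha-W\bar{\alpha}=(\un-W)m+\i(\un+W)n=2\i(v+\i\un)^{-1}(vn+m)$ using $\un-W=2\i(v+\i\un)^{-1}$ and $\un+W=2v(v+\i\un)^{-1}$; this is correct (the scalar matrices and $v$ commute) and isolates the essential cancellation more transparently. Finally, you explicitly address the homomorphism property of $\Theta$, the cocycle identity for the automorphy factor, and the matching of \eqref{clawr} with \eqref{compositieX} — points the paper's proof leaves implicit even though the statement asserts $\Theta$ is a group isomorphism — so your plan is, if anything, more complete than the printed argument.
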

\begin{proof}
We introduce for $W$ in  \eqref{x44} its expression from \eqref{big1}, and we
get $$W_1=[(p+q)v+\i(q-p)][(\bar{q}+\bar{p})v+\i(\bar{p}-\bar{q})]^{-1}.$$
The expression of $W_1$ is introduced in the inverse \eqref{big11}  of
\eqref{big1}   for $v_1$. Use is made of 
 \eqref{CUCURUCU} and the first equality in \eqref{conf1} is
 obtained. The second one is a consequence of the symmetry of $v$, and it
 can be directly checked up with equations \eqref{lica}. 

For the second assertion, we start with \eqref{big2},  $2\i u_1=(v_1+\i\un)z_1$, in
which we introduce the expression \eqref{xxxX} for $z_1$. But with \eqref{big11}, 
$v_1+\i\un =2\i(\un-W_1)^{-1}$, so we get
$$u_1=(\un-W_1)^{-1}(Wq^*+p^*)[2\i(v+\i\un)^{-1}u+\alpha-W\bar{\alpha}].$$ 
In the above expression we write $W_1$ as function of $W$ with the
linear fractional transform \eqref{x44} and express $W$ as function of
$v$ with the  Cayley transform \eqref{big1}.  We replace $\alpha = m+\i n$,
$m,n\in\R^n$,  and
express the matrix elements  of $g_{\C}$ in function of the matrix elements
of $g$ via the relations \eqref{CUCURUCU} and we get also formula \eqref{conf2}.  
\end{proof}
\begin{Proposition}\label{PARTC}The partial Cayley transform  is a  K\"ahler
  homogeneous diffeomorfism,  $\Phi^*\omega_n=\omega'_n=\omega_n\circ\Phi$,  i.e. under the  transform
\eqref{bigtransf}, 
the K\"ahler two-form \eqref{aabX}  on $\mc{D}^J_n$, $G^J_n$-invariant
under the action \eqref{x44}, \eqref{xxxX},   becomes the K\"ahler
two-form $\omega'_n$ \eqref{kl2} on $\mc{X}^J_n$, $G^J_n(\R )_0$-invariant to the
action \eqref{conf}  
\begin{equation}\label{kl2}
\begin{split}
- \i \omega'_n & =\frac{k}{2}\tr (H \wedge\bar{H})+\frac{2}{\i}\tr
(G^tD\wedge\bar{G}), \quad{\emph{\text{where}}}\\
D & =  (\bar{v}-v)^{-1}, H = D\dd v ;~ G =\dd u-\dd vD(\bar{u}-u).
\end{split}
\end{equation}
\end{Proposition}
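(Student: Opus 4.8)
The plan is to establish $\Phi^*\omega_n=\omega'_n$ by substituting the partial Cayley transform \eqref{bigtransf} directly into the formula \eqref{aabX} for $-\i\omega_n$, and then to read off the homogeneity statement from Proposition \ref{THETAS}. Throughout I would write $P=v+\i\un$ (so $\bar P=\bar v-\i\un$) and $D=(\bar v-v)^{-1}$, which is well defined since $v\in\mc{X}_n$. From \eqref{big1} one has $W=\un-2\i P^{-1}$, hence $\dd W=2\i P^{-1}(\dd v)P^{-1}$; a short computation with $\bar W=\un+2\i\bar P^{-1}$ gives $\un-W\bar W=2\i P^{-1}(\bar v-v)\bar P^{-1}$, so that
\[
M=(\un-W\bar W)^{-1}=\tfrac1{2\i}\,\bar P D P,\quad \bar M=\tfrac1{2\i}\,P D\bar P,\quad B=M\,\dd W=\bar P\,D\,(\dd v)\,P^{-1}.
\]
For the $\tfrac{k}{2}$ term it then suffices to observe that in the matrix-valued wedge $\tr(B\wedge\bar B)$ the scalar factors $\bar P$, $\bar P^{-1}$ (on the left of $B$ and $\bar B$) and $P^{-1}$, $P$ (in the interior) collapse to identities after going once around the trace; using $\bar D=-D$ this yields $\tr(B\wedge\bar B)=\tr(H\wedge\bar H)$ with $H=D\,\dd v$, which is the first term of \eqref{kl2}.

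The main work is the second term $\tr(A^t\bar M\wedge\bar A)$, $A=\dd z+\dd W\bar\eta$. First I would compute $\eta=M(z+W\bar z)$ in the new coordinates, starting from $z=2\i P^{-1}u$ and $\bar z=-2\i\bar P^{-1}\bar u$; using the identity $P-2\i\un=v-\i\un=\bar P-D^{-1}$ the expression collapses to $\eta=\bar u-\bar P D(\bar u-u)$, hence $\bar\eta=u-P D(\bar u-u)$. Substituting $\dd z=-2\i P^{-1}(\dd v)P^{-1}u+2\i P^{-1}\dd u$ and $\dd W\bar\eta=2\i P^{-1}(\dd v)P^{-1}u-2\i P^{-1}(\dd v)D(\bar u-u)$ into $A=\dd z+\dd W\bar\eta$, the two $(\dd v)P^{-1}u$ contributions cancel and one gets
\[
A=2\i\,P^{-1}G,\qquad G=\dd u-(\dd v)D(\bar u-u).
\]
Since $v=v^t$ we have $P^t=P$, so $A^t=2\i\,G^tP^{-1}$; combining this with $\bar M=\tfrac1{2\i}P D\bar P$ and $\bar A=-2\i\bar P^{-1}\bar G$, and moving the scalar factors $P^{-1}P=\un$ and $\bar P\bar P^{-1}=\un$ through the wedge, yields $\tr(A^t\bar M\wedge\bar A)=-2\i\,\tr(G^tD\wedge\bar G)=\tfrac2{\i}\tr(G^tD\wedge\bar G)$, exactly the second term of \eqref{kl2}. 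Adding the two pieces proves $\Phi^*(-\i\omega_n)=-\i\omega'_n$, i.e. $\Phi^*\omega_n=\omega'_n$.

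It remains to justify the words \emph{K\"ahler homogeneous diffeomorphism}. The space $\mc{X}^J_n$ is the coset realization on which $G^J_n(\R)_0$ acts via \eqref{conf}, and by Proposition \ref{THETAS} the biholomorphism $\Phi$ intertwines this action with the $G^J_n$-action \eqref{x44}, \eqref{xxxX} on $\mc{D}^J_n$; since $\omega_n$ is $G^J_n$-invariant (Proposition \ref{mm11}), its pullback $\omega'_n$ is automatically $G^J_n(\R)_0$-invariant, so together with $\Phi^*\omega_n=\omega'_n$ the transform $\Phi$ is a K\"ahler homogeneous diffeomorphism in the sense fixed in the Notation. The hard part will be the bookkeeping in the middle step: the simplification of $\eta$ and the cancellation producing $A=2\i P^{-1}G$ rest on the identities $P-2\i\un=\bar P-D^{-1}$ and $\bar D=-D$ together with careful tracking of left/right matrix multiplication, and the passage to $\tr(G^tD\wedge\bar G)$ requires handling the matrix-valued two-forms correctly — only the scalar (function) matrix factors, not the $\dd v,\dd\bar v,\dd u,\dd\bar u$ factors, may be commuted through the wedge — plus the use of $v=v^t$ to identify $A^t$.
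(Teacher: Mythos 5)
Your computation is correct and complete, but it takes a genuinely different route from the paper. The paper does not recompute the pullback $\Phi^*\omega_n$ at all: it cites \cite{mlad} for the derivation of \eqref{kl2} from \eqref{aabX} and devotes its proof entirely to an independent, direct verification that the second term $\tr(G^tD\wedge\bar{G})$ of $\omega'_n$ is invariant under the real action \eqref{conf} — differentiating \eqref{conf2}, computing $v_1-\bar{v}_1=(\bar{v}c^t+d^t)^{-1}(v-\bar{v})(cv+d)^{-1}$ and $\dd v_1=(vc^t+d^t)\dd v(cv+d)^{-1}$, and showing $G_1=(vc^t+d^t)^{-1}G$ by means of the symplectic identities \eqref{lica}. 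You do the opposite: you supply the omitted Cayley-transform computation in full (your identities $\un-W\bar{W}=2\i P^{-1}(\bar v-v)\bar P^{-1}$, $\eta=\bar u-\bar PD(\bar u-u)$ and $A=2\i P^{-1}G$ all check out, and your handling of function-valued matrix factors through the matrix-valued wedges is the right mechanism), and you then obtain the $G^J_n(\R)_0$-invariance of $\omega'_n$ for free from the intertwining relation $\Phi h=h_*\Phi$ of Proposition \ref{THETAS} together with the $G^J_n$-invariance of $\omega_n$ from Proposition \ref{mm11}. Your route is more self-contained (it proves the formula the paper only quotes) and replaces the paper's page of matrix manipulation with a two-line formal argument; the paper's route has the virtue of providing an invariance check that is logically independent of the equivariance statement and of the correctness of the pullback computation. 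One cosmetic slip: in describing $\tr(B\wedge\bar B)$ you say $\bar P^{-1}$ sits on the left of $\bar B$, whereas $\bar B=P\bar D(\dd\bar v)\bar P^{-1}$ has $P$ on the left and $\bar P^{-1}$ on the right; the cancellations you assert ($P^{-1}P$ at the interior junction, $\bar P^{-1}\bar P$ around the trace) are nonetheless the correct ones.
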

\begin{proof}The expression \eqref{kl2} was deduced firstly in
  \cite{mlad}. Here we shortly indicate how to check up   the group invariance. We
  calculate only the second term in the sum \eqref{kl2}  because the first one is
  well known. 

Differentiating \eqref{conf2}, we get
$$\dd u_1
=(vc^t+d^t)^{-1}[\dd u+\dd v(cv+d)^{-1}(dn-c(u+m))].$$
Now we calculate $\Psi=(v_1-\bar{v}_1)^{-1}(u_1-\bar{u}_1)$. 
Starting from \eqref{conf1} and taking into account \eqref{lica}, we have
$$v_1-\bar{v}_1= (\bar{v}c^t+d^t)^{-1}(v-\bar{v})(cv+d)^{-1}.$$
Using $$(\bar{v}c^t+d^t)(vc^t+d^t)^{-1}-\un =(\bar{v}-v)c^t(vc^t+d^t)^{-1}, $$
$$(\bar{v}c^t+d^t)(vc^t+d^t)^{-1}v-\bar{v}=(v-\bar{v})[\un+c^t(d^t)^{-1}v]^{-1},$$
we
get $$\Psi=dn-cm+(cv+d)(v-\bar{v})^{-1}[(\bar{v}c^t+d^t)(vc^t+d^t)^{-1}u-\bar{u}]. $$
 Taking into account \eqref{lica1} in the differential of
 \eqref{conf2}, we get $$\dd v_1=(vc^t+d^t)\dd v(cv+d)^{-1},$$
and we find
$$G_1=(vc^t+d^t)^{-1}(\dd u+\dd v\Xi),$$
where $$\Xi =-(v-\bar{v})^{-1}Y+(v-\bar{v})^{-1}\bar{u},$$
$$Y=(v-\bar{v})(cv+d)^{-1}c+(\bar{v}c^t+d^t)(vc^t+d^t)^{-1}.$$
Using relations of the type $v(cv+d)^{-1}c=
(\un+vd^{-1}c)^{-1}vd^{-1}c$, it can be shown that $Y=\un$, and we get $G_1=(vc^t+d^t)^{-1}G$, and
the invariance of the second term in formula \eqref{kl2} is proved. Then the
invariance of $\omega'_n$ under the  action of 
\eqref{conf} follows.  
\end{proof}
$\omega'_n$ given by (\ref{kl2}) is the  ``$n$''-dimensional generalization of
 Berndt-K\"ahler two-form $\omega'_1$.  In \S 37 in \cite{cal3}
 K\"ahler calls $\mc{X}^J_1$ {\it Phasenraum der Materie}, $v$ is {\it
   Pneuma}, $u$ is {\it Soma}. 
\subsection{Comparison with Yang's results}\label{YANGcomp}
\enlargethispage{1cm}
J.-H. Yang \cite{Y07}-\cite{Y10} considers  the Siegel-Jacobi space of
degree $n$ and order $m$, 
$\mathbf{H}_{n,m}=\mc{X}_n\times \C^{mn}$, 
 the  Heisenberg group
$$H^{(n,m)}_{\R}=\left\{(\lambda,\mu, \kappa)|
  \lambda,\mu\in M_{mn}(\R),\kappa \in M_{m}(\R)\right\}, $$
and  the Jacobi group $G^J=\text{Sp}(n,\R)\ltimes H^{(n,m)}_{\R}$,  with the multiplication law
$$(M_0,(\lambda_0,\mu_0,\kappa_0))\cdot (M,(\lambda,\mu,\kappa))=
(M_0M,(\tilde{\lambda}_0+\lambda,\tilde{\mu}_0+\mu,\kappa_0+\kappa+\tilde{\lambda}_0\mu^t-\tilde{\mu}_0\lambda^t)),$$
where $(\tilde{\lambda}_0,\tilde{\mu}_0)=(\lambda_0,\mu_0)M$.  $G^J$
acts transitively on the Siegel-Jacobi space $\mathbf{H}_{n,m}$ by 
$$(M,(\lambda,\mu,\kappa))(\Omega,Z)=(M\circ\Omega,(Z+\lambda\Omega+\mu)(C\Omega+D)^{-1}),$$
and  $G^J/K^J\cong\mathbf{H}_{n,m}$ is a nonreductive complex manifold, where $K^J=\text{U}(n)\times\text{Sym}(n,\R). $
Now we  identify  $(v,u)$ in our partial Cayley transform (\ref{conf}) with Yang's  partial Cayley transform  (11) in  \cite{Yan},
\begin{equation}\label{parY}
\Omega=\i (\un+W)(\un-W)^{-1} ; Z=2\i \eta (\un-W)^{-1};
\end{equation}
\begin{equation}\label{schimbB}
(v,u)\leftrightarrow(\Omega,\frac{Z^t}{2\i}) \quad\text{and\quad}  (W,z)
\leftrightarrow (W,\eta^t).
\end{equation}
\begin{Remark}\label{Rem2}
The K\"ahler two-form in the  case $m=1$ in {\emph{Theorem 1}} in  \cite{Yan}  is
the  K\"ahler two-form on $\mc{X}^J_n$ \eqref{kl2}, while the  K\"ahler
two-form on $\mc{D}^J_n$
\eqref{aabX}  is the corresponding one given in {\emph{Theorem 6}} in
 \cite{Yan}.
\end{Remark}
\begin{proof} 
We use also Yang's notation $\Omega= X +\i Y; Z= U+\i V$ and we
express  our (\ref{kl2})  in Yang's notation as 
$$ D= -\frac{1}{2\i}Y^{-1};~H=(-2\i Y)^{-1} \dd\Omega; ~G=\frac{\dd Z^t}{2\i}-\dd\Omega Y^{-1}V^t,$$
$$-\i w'_n=\frac{k}{8}\tr (Y^{-1}\dd\Omega\wedge Y^{-1}\dd\bar{\Omega})+
\frac{1}{8}\tr[(\dd Z-VY^{-1}\dd \Omega)Y^{-1}\wedge(\dd \bar{Z}^t-\dd
\bar{\Omega}Y^{-1}V^t)],$$
\begin{align*}
-\i w'_n & =  \frac{k}{8}\tr(Y^{-1}\dd \Omega\wedge Y^{-1}\dd \bar{\Omega})
+\frac{1}{8}\tr(\dd ZY^{-1}\wedge \dd\bar{Z}^t)
+\frac{1}{8}\tr(VY^{-1}\dd\Omega Y^{-1}\wedge \dd\bar{\Omega}Y^{-1}V^t)\\
 ~~ & ~ 
-\frac{1}{8}\tr(\dd ZY^{-1}\wedge \dd \bar{\Omega}Y^{-1}V^t)
-\frac{1}{8}\tr(VY^{-1}\dd \Omega Y^{-1}\wedge \dd \bar{Z}^t).
\end{align*}
The second term in the sum above reads
$$\dd Z_{ij}Y^{-1}_{jk}\wedge \dd\bar{Z}^t_{ki }= Y^{-1}_{kj}\dd Z_{ji}\wedge \dd\bar{Z}_{ik}= \tr(Y^{-1}\dd Z^t\wedge \dd\bar{Z}),$$
i.e. the corresponding term in Yang's formula and similarly for the other 3 terms in the sum. 
\end{proof}

\section{The fundamental conjecture for the Siegel-Jacobi domains}\label{FCNN}

Let us remind the {\it fundamental conjecture for homogeneous K\"ahler
manifolds}
 (Gin\-dikin -Vinberg): {\it  every homogenous K\"ahler manifold is a holomorphic fiber bundle over a homogenous bounded domain in which the fiber is the product of a flat homogenous K\"ahler manifold and a compact simply connected homogenous K\"ahler manifold}. The compact case was considered by Wang
\cite{wa}; Borel \cite{bo} and   Matsushima \cite{ma} have  considered the
case of a transitive reductive group  of automorphisms,  while Gindikin and Vinberg  \cite{GV}
considered a  transitive  automorphism group. We mention also the
essential contribution of Piatetski-Shapiro in this field
\cite{pia}. The complex version, in the formulation of Dorfmeister and
Nakajima \cite{DN}, essentially asserts that: {\it every homogenous
  K\"ahler manifold, as a complex manifold, is the product of a
  compact simply connected homogenous manifold (generalized flag
  manifold), a homogenous bounded domain,  and $\C^n/\Gamma$, where $\Gamma$ denotes a discrete subgroup of translations of $\C^n$}.

\begin{Proposition}\label{LKJ}
Under the homogeneous K\"ahler  transform $FC$ 
  \eqref{zzZ},
\begin{equation}\label{zzZ}\C^n\times\mc{D}_n\ni (\eta,W)\xrightarrow{FC} (z,W)\in\mc{D}^J_n,
FC(\eta,W)=(z,W), 
 ~ z=\eta - W\bar{\eta},
\end{equation} 
\begin{equation}\label{invFC}FC^{-1}: \eta =
  (\un-W\bar{W})^{-1}(z+W\bar{z}).
\end{equation}
the K\"ahler two-form  \eqref{aabX} on $\mc{D}^J_n$, 
  $G^J_n$-invariant to the action \eqref{x44}, \eqref{xxxX}, 
becomes the K\"ahler two-form on $\mc{D}_n\times\C^n$,
$FC^*\omega_n=\omega_{n,0}$, 
\begin{equation}\label {bX}
- \i\omega _{n,0}  = \tfrac{k}{2}\tr (B\wedge\bar{B})
 +\tr (\dd\eta^t\wedge \dd\bar{\eta}),
\end{equation}
invariant to the $G^J_n$-action on $\mc{D}_n\times\C^n$,
$(g,\alpha)\cdot (\eta,W)\rightarrow(\eta_1,W_1)$, with $W_1$ given in
\eqref{xxxX} and 
\begin{equation}\label{nouinv}
\eta_1=p(\eta+\alpha)+q(\bar{\eta}+\bar{\alpha}).
\end{equation}
Under the homogenous K\"ahler transform 
\begin{equation}\label{XCX1}
FC^{-1}_1:
\eta=(\bar{v}-\i\un)(\bar{v}-v)^{-1}(v-\i\un)[(v-\i\un)^{-1}u-(\bar{v}-\i\un)^{-1}\bar{u}].
\end{equation} 
the K\"ahler two-form \emph{(\ref{kl2})} becomes a K\"ahler two-form
on $\mc{X}_n\times\C^n$, $FC^*_1\omega'_n=\omega'_{n,0}$, 
\begin{equation}\label {bX1}
- \i\omega_{n,0}'   = \tfrac{k}{2}\tr (H\wedge\bar{H})
 +\tr (\dd\eta^t\wedge \dd\bar{\eta}), ~H=(\bar{v}-v) ^{-1}\dd v.
\end{equation}
The inverse transform of \eqref{XCX1} is
\begin{equation}\label{XCX2}
FC_1: u=\frac{1}{2\i}[(v+\i\un)\eta-(v-\i\un)\bar{\eta}].
\end{equation}
The K\"ahler two-form  \eqref{bX1} is invariant to the action
$G^J_n(\R)_0$ on $\mc{X}_n\times \C^n $, $(g,\alpha)\times
(v,\eta)\rightarrow(v_1,\eta_1)$, where $g$ has the form \eqref{mM},
$v_1$ is given by \eqref{conf1}, while 
\begin{equation}\label{cvr}
\eta_1 =\frac{1}{2}(\eta+\alpha)[a+d+\i(b-c)]+\frac{1}{2}(\bar{\eta}+\bar{\alpha})[a-d-\i(b+c)].
\end{equation}
\end{Proposition}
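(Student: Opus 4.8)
The plan is to deal first with the ball $\mc{D}^J_n$ by the explicit real-analytic change of coordinates \eqref{zzZ}, and then to obtain the Siegel upper half-plane case essentially for free by composing with the partial Cayley transform of Proposition~\ref{PARTC}.

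For the ball, I would begin by verifying that $FC$ in \eqref{zzZ} and $FC^{-1}$ in \eqref{invFC} are mutually inverse diffeomorphisms of $\C^n\times\mc{D}_n$ onto $\mc{D}^J_n$: if $z=\eta-W\bar\eta$ then $z+W\bar z=(\un-W\bar W)\eta$, whence $\eta=M(z+W\bar z)$ with $M=(\un-W\bar W)^{-1}$, which is exactly \eqref{x2}. (Note that $FC$ is real-analytic but not holomorphic, which is all the notion of K\"ahler homogeneous diffeomorphism demands.) The core computation is the transformation of the two-form. Substituting $z=\eta-W\bar\eta$ into $A=\dd z+\dd W\bar\eta$ of \eqref{aabX} and differentiating gives $A=\dd\eta-W\,\dd\bar\eta$, hence $A^t=\dd\eta^t-\dd\bar\eta^tW$ (using $W=W^t$). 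I would then expand $\tr(A^t\bar M\wedge\bar A)$ into four terms, using the identities $M^t=\bar M$, $W\bar M=MW$, $\bar WM=\bar M\bar W$, $MW\bar W=M-\un$, all immediate from $M^{-1}=\un-W\bar W$. Two of the four terms are of the form $\tr(\dd\eta^t(\bar WM)\wedge\dd\eta)$ and $\tr(\dd\bar\eta^t(MW)\wedge\dd\bar\eta)$ and vanish, since $\bar WM$ and $MW$ are symmetric matrices contracted against the antisymmetric $\dd\eta\wedge\dd\eta$, resp. $\dd\bar\eta\wedge\dd\bar\eta$. The remaining two combine to $\tr(\dd\eta^t\bar M\wedge\dd\bar\eta)+\tr(\dd\bar\eta^tM\wedge\dd\eta)+\tr(\dd\eta^t\wedge\dd\bar\eta)$, and the sign rule $\tr(\alpha\wedge\beta)=-\tr(\beta^t\wedge\alpha^t)$ for matrix-valued one-forms gives $\tr(\dd\bar\eta^tM\wedge\dd\eta)=-\tr(\dd\eta^t\bar M\wedge\dd\bar\eta)$, so the first two cancel and $\tr(A^t\bar M\wedge\bar A)=\tr(\dd\eta^t\wedge\dd\bar\eta)$. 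Since $B=M\dd W$ and the term $\tfrac{k}{2}\tr(B\wedge\bar B)$ of \eqref{aabX} involves $W$ only, it is unchanged, which is \eqref{bX}: $FC^*\omega_n=\omega_{n,0}$. For equivariance, the $\eta$ of \eqref{invFC} is the $\eta$ of \eqref{x2}, whose transformation law is \eqref{x3}, i.e.\ $\eta_1=p(\eta+\alpha)+q(\bar\eta+\bar\alpha)$; hence $FC$ intertwines the $G^J_n$-action \eqref{x44}, \eqref{xxxX} with the action $(g,\alpha)\cdot(\eta,W)=(\eta_1,W_1)$, $\eta_1$ as in \eqref{nouinv}, and $G^J_n$-invariance of $\omega_{n,0}$ is inherited from that of $\omega_n$ (Proposition~\ref{mm11}).

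To pass to $\mc{X}^J_n$, introduce $\psi:\C^n\times\mc{X}_n\to\C^n\times\mc{D}_n$, $\psi(\eta,v)=(\eta,W)$ with $W=(v-\i\un)(v+\i\un)^{-1}$ the Cayley transform and $\eta$ left unchanged, and set $FC_1:=\Phi^{-1}\circ FC\circ\psi$, with $\Phi$ the partial Cayley transform \eqref{bigtransf}. Using $(v+\i\un)^{-1}(v-\i\un)=W$ one checks, from \eqref{bigtransf}, that $z=\eta-W\bar\eta$ exactly when $u$ is given by \eqref{XCX2}; this shows simultaneously that $FC_1$ is the map \eqref{XCX2}, that $\Phi\circ FC_1=FC\circ\psi$, and, on solving the resulting real-linear system for $\eta,\bar\eta$, that $FC_1^{-1}$ is \eqref{XCX1}. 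Then
\[
FC_1^*\omega'_n=FC_1^*\Phi^*\omega_n=\psi^*FC^*\omega_n=\psi^*\omega_{n,0},
\]
by Proposition~\ref{PARTC} and the ball case just proved. In $\psi^*\omega_{n,0}$ the summand $\tr(\dd\eta^t\wedge\dd\bar\eta)$ is untouched since $\psi$ fixes $\eta$, while $\tfrac{k}{2}\tr(B\wedge\bar B)$ pulls back under $v\mapsto W$ to $\tfrac{k}{2}\tr(H\wedge\bar H)$ with $H=(\bar v-v)^{-1}\dd v$ --- the classical fact that the Cayley transform is an isometry of the Bergman metrics of $\mc{D}_n$ and $\mc{X}_n$, already contained in the first term of Proposition~\ref{PARTC}. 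This gives \eqref{bX1}. Finally, the commutative square together with the isomorphism $\Theta$ of Proposition~\ref{THETAS} carries the ball-side action to $(g,\alpha)\times(v,\eta)\to(v_1,\eta_1)$ with $v_1$ as in \eqref{conf1}; substituting $2p=a+d+\i(b-c)$, $2q=a-d-\i(b+c)$ from \eqref{CUCURUCU1} and $\alpha=m+\i n$ into \eqref{nouinv} yields \eqref{cvr}, and $G^J_n(\R)_0$-invariance of $\omega'_{n,0}$ follows from that of $\omega_{n,0}$.

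The one genuinely non-mechanical point is the matrix computation for the ball: arranging the expansion of $\tr(A^t\bar M\wedge\bar A)$ so that the symmetric blocks $MW$, $\bar WM$ are killed by $\dd\eta\wedge\dd\eta$, $\dd\bar\eta\wedge\dd\bar\eta$, and applying the transpose--wedge sign rule correctly to cancel the surviving mixed terms. Once that, and the commutative square $\Phi\circ FC_1=FC\circ\psi$, are in place, the rest is bookkeeping; this is, in spirit, K\"ahler's argument in \cite{cal3}.
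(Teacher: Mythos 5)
Your argument is correct, and its central computation is the paper's own: substitute $z=\eta-W\bar\eta$ into $A=\dd z+\dd W\bar\eta$ to get $A=\dd\eta-W\dd\bar\eta$, kill the $\dd\eta\wedge\dd\eta$ and $\dd\bar\eta\wedge\dd\bar\eta$ blocks by the symmetry of $W\bar M=MW$ and $\bar M\bar W=\bar WM$, and reduce the mixed blocks with $W\bar M\bar W=M-\un$ and $M^t=\bar M$. Where you genuinely diverge is in the surrounding bookkeeping. For \eqref{nouinv} you simply cite \eqref{x3} of Proposition \ref{mm11} (legitimate, since that transformation law is part of the quoted result) and deduce invariance of $\omega_{n,0}$ abstractly from the intertwining property; the paper instead rederives $\eta_1$ from \eqref{invFC}, \eqref{x44}, \eqref{xxxX} through the identities \eqref{Mi2}--\eqref{Mi4} and then verifies \eqref{trtr} directly from \eqref{simp2}. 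For the upper half plane, the paper obtains \eqref{XCX1} by substituting \eqref{deltaw}, \eqref{big2}, \eqref{big1} into \eqref{x2} and obtains \eqref{bX1} by direct computation, whereas your commutative square $\Phi\circ FC_1=FC\circ\psi$ with the functorial identity $FC_1^*\omega'_n=\psi^*\omega_{n,0}$ avoids all recomputation, at the price of leaning on the fact (declared ``well known'' in the proof of Proposition \ref{PARTC}) that the Cayley transform carries the first term of \eqref{aabX} to the first term of \eqref{kl2}. Both routes land on the same formulas; yours is shorter and more structural, the paper's is more self-contained and generates intermediate identities (e.g. \eqref{Mi2}, \eqref{deltaw}) that are reused later. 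The only places you should still write out details are the four-term expansion itself and the solution of the real-linear system giving \eqref{XCX1} from \eqref{XCX2}, both of which are routine as you indicate.
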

\begin{proof}
Following \cite{cal3}, \cite{ez}, we introduce the variables $P,Q\in\R^n$ such that
$u=vP+ Q$, where
$(u,v)\in\C^n\times\mc{X}_n$ are local coordinates on the Siegel-Jacobi
upper-half plane $\mc{X}^J_n$.  Using \eqref{big22}, we have $$ u
=vP+Q= (\un-W\bar{W})^{-1}z$$ and we introduce in formula above for $v$ the
expression given by \eqref{big11}. We get $z=\eta-W\bar{\eta}$, where
$\eta= P+\i Q$   has appeared already  in \eqref{x2} and 
\eqref{aabX}. For $A$ in \eqref{aabX}
we get  $A=\dd \eta -W \dd\bar{\eta}$.  In \eqref{aabX}, we make the
transform \eqref{zzZ}. 
Also, from (\ref{zzZ}) and (\ref{big1}), we have \eqref{x2},  i.e. \eqref{invFC}.

We use the relation $M-W\bar{M}\bar{W}=\un$ for the terms of the type
$\dd\eta^t_i\wedge \dd\bar{\eta}_j$,  the symmetry of the
matrices $\bar{M}\bar{W}$  for the terms of the type
$\dd\bar{\eta}^t_i\wedge \dd\bar{\eta}_j$ and $W\bar{M}$ for the terms of the
type $\dd\eta^t_i\wedge \dd\eta_j$, and  we  get for $\omega_{n,0}=\omega_n\circ FC$
the  expression given by \eqref{bX}.

Now we calculate the action of $G^J_n$ on $\C^n\times\mc{D}_n$ induced
from the action \eqref{x44}, \eqref{xxxX} on $\mc{D}^J_n$ applying the $FC$
transform \eqref{zzZ}.  We want to find $\eta_1$ from \eqref{invFC}
with $(W_1,z_1)$  given by \eqref{x44}, \eqref{xxxX},
\begin{equation}\label{Mi1}
(\un-W_1\bar{W}_1)\eta_1=z_1+W_1\bar{z}_1 .
\end{equation}
Firstly, with \eqref{simplectic}, we calculate the lhs of \eqref{Mi1} as
\begin{equation}\label{Mi2}
\un-W_1\bar{W}_1=(Wq^*+p^*)^{-1}(\un-W\bar{W})(q\bar{W}+p)^{-1},
\end{equation} 
where $p,q$ are components of the matrix $g\in\text{Sp}(n,\R)_{\C}$ as in \eqref{dgM}.

Now we  introduce the value $$z_1=(Wq^*+p^*)^{-1}[\eta+\alpha-W(\bar{\eta}+\bar{\alpha})],$$
  in  rhs of \eqref{Mi1}, and we write it as 
\begin{equation}\label{Mi3}
\begin{split}
z_1+W_1\bar{z}_1 & = (Wq^*+p^*)^{-1}Y, \quad{\text{where}}\\
Y & =\mc{P}(\eta+\alpha)+\mc{Q}(\bar{\eta}+\bar{\alpha}),\\
\mc{P} & =\un-(q^t+Wp^t)(\bar{W}b^t+a^t)^{-1}\bar{W}, \\
\mc{Q} & = -W +(q^t+Wp^t)(\bar{W}b^t+a^t)^{-1} . 
\end{split}
\end{equation}
Using  the successively the second relation in \eqref{simp1}, after an easy but long calculation, we find 
\begin{equation}\label{Mi4}
\begin{split}
\mc{P} & = (\un-W\bar{W})(p+q\bar{W})p ,\\
\mc{Q} & =(\un-W\bar{W})(p+q\bar{W})q.
\end{split}
\end{equation}
Combining \eqref{Mi3}-\eqref{Mi4}, we get for $\eta_1$ the value given
in \eqref{nouinv}. 

In order to verify the invariance of the K\"ahler two-form \eqref{bX}
to the action \eqref{nouinv}, we have to check up that 
\begin{equation}\label{trtr}
\tr(\dd\eta_1^t\wedge \dd\bar{\eta}_1) = \tr (\dd\eta^t\wedge
\dd\bar{\eta}),
\end{equation}
which is true because of the first relation in \eqref{simp2}.

With \eqref{big11}, we get
\begin{equation}\label{deltaw}
(\un-W\bar{W})^{-1}=\frac{1}{2\i}(\bar{v}-\i\un)(\bar{v}-v)^{-1}(v+\i\un).
\end{equation}
We introduce \eqref{deltaw}, \eqref{big2} and  \eqref{big1}  in
\eqref{x2} and we get \eqref{XCX1}.    

\eqref{XCX2} is obtained  introducing in     \eqref{big2}  the
expression \eqref{zzZ} with $W$ given by \eqref{big1}.

Finally, \eqref{cvr} is a consequence of \eqref{nouinv} and
\eqref{CUCURUCU1}.  Alternatively, the invariance \eqref{trtr},  where
$\eta_1$ has the expression \eqref{cvr},  can be checked up directly,  taking
into account that  the matrices $a,b,c,d$ appearing in \eqref{cvr} are the
components of $g\in\text{Sp}(n,\R)$ as in \eqref{mM},  and consequently
verify the conditions \eqref{lica}. 
\end{proof}
We recall that in the case $n=1$,   \eqref{bX1} appears in the paper
\cite{cal3}  of
Erich K\"ahler
 as equation (3) in \S 38. 
\enlargethispage{1cm}
\begin{corollary}\label{UNICUC} Under the $FC$-change of coordinates \eqref{zzZ},
  $x=\eta-V\bar{\eta},~ y=\xi-W\bar{\xi}$, the
  reproducing kernel \eqref{KHKX}  becomes $\mc{K}=K\circ FC$, 
\begin{equation}\label{MNKM}
\begin{split}
\mc{K}(\bar{\eta},\bar{V};\xi, W) & =(\det U)^{k/2}\exp{\mathcal{F}}, \quad
\emph{\text{where}} \\
~2\mathcal{F} & =\mathcal{F}_0+\Delta\mathcal{F};\\
~\mathcal{F}_0 & =\bar{\xi}^t\xi+\bar{\eta}^t\eta-\bar{\xi}^tW\bar{\xi}-\eta^t\bar{V}\eta,
\\
\Delta\mathcal{F}  & =(\bar{\zeta}^t-\zeta^t\bar{V})U(\xi-W\bar{\xi})+
(\bar{\eta}^t-\eta^t\bar{V})U(-\zeta+W\bar{\zeta}); \zeta=\eta-\xi .
\end{split}
\end{equation}
In particular, for $\xi=\eta, V=W$,  we have $\Delta\mathcal{F} =0$,  and 
\begin{equation}\label{LUI}
\mc{K}=\det (M)^{\frac{k}{2}}\exp (\mathcal{F}), \quad
\emph{\text{where}}  ~~\mathcal{F}=
\bar{\eta}^t\eta-\frac{1}{2}\eta^t\bar{W}\eta-\frac{1}{2}\bar{\eta}^tW\bar{\eta}, 
\end{equation}
 and the scalar product \eqref{ofiXX} becomes
\begin{equation}\label{XofiXX}
\begin{split}(\phi ,\psi ) & = \Lambda_n \int_{\eta\in\C^n;
\un-W\bar{W}>0}\bar{f}_{\phi}(\eta,W)f_{\psi}(\eta,W)\rho_2 \dd \eta \dd W,\\
\rho_2 & 
=\det(\un-W\bar{W})^ q\exp (-\mathcal{F}), q=k/2-n-1,
\end{split}
\end{equation}
with $\mc{F}$ given by \eqref{LUI}.

Also we have the relation
$$- \frac{\pa F}{\pa w_{ij}}=\frac{\pa \mathcal{F}}{\pa w_{ij}}= -
\bar{\eta}_i\bar{\eta}_j+\frac{1}{2}\bar{\eta}_i\bar{\eta}_j\delta_{ij},
\quad\emph{\text{or }~~~}\nabla F= -\nabla \mc{F} =
\frac{1}{2}\bar{\eta}\otimes\bar{\eta} . $$
\end{corollary}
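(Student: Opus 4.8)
The plan is to derive Corollary \ref{UNICUC} directly from Proposition \ref{LKJ} and the explicit formula \eqref{KHKX} by substituting the $FC$-change of coordinates into the reproducing kernel. First I would recall that the $FC$-transform \eqref{zzZ} acts only on the $\C^n$-variable, leaving $W$ fixed, so the factor $\det(U)^{k/2}$ in \eqref{KHKX}, where $U=(\un-W\bar{V})^{-1}$, is untouched and all the work is in transforming $F$. Setting $x=\eta-V\bar{\eta}$ and $y=\xi-W\bar{\xi}$ according to \eqref{zzZ}, I would substitute these into
$$
2F(\bar{x},\bar{V};y,W)=2\langle x,Uy\rangle+\langle V\bar{y},Uy\rangle+\langle x,UW\bar{x}\rangle
$$
and expand. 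The bilinear forms produce terms quadratic in $\eta,\xi,\bar\eta,\bar\xi$; the key algebraic input is the relation $\un-W\bar{W}=U^{-1}$ specialized appropriately, i.e. $M^{-1}=\un-W\bar W$ when $V=W$, together with the symmetry of $W$ and $V$. The bookkeeping is to collect the ``diagonal'' part $\mathcal{F}_0=\bar{\xi}^t\xi+\bar{\eta}^t\eta-\bar{\xi}^tW\bar{\xi}-\eta^t\bar V\eta$ and to show that the remaining cross-terms assemble into $\Delta\mathcal{F}$ expressed through $\zeta=\eta-\xi$. I expect this collection step to be the main obstacle: it is a purely computational but somewhat delicate rearrangement, and the natural strategy is to introduce $\zeta=\eta-\xi$ early and rewrite every occurrence of $x$ and $y$ in terms of $\eta,\xi,\zeta$ so that the terms not involving $\zeta$ are precisely $\mathcal{F}_0$.

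For the diagonal specialization $\xi=\eta$, $V=W$ one has $\zeta=0$, hence $\Delta\mathcal{F}=0$ immediately, and $\mathcal{F}_0$ collapses to $2\bar{\eta}^t\eta-\bar\eta^tW\bar\eta-\eta^t\bar W\eta=2\mathcal{F}$, giving \eqref{LUI} with $M=(\un-W\bar W)^{-1}$; this is also consistent with applying $FC$ directly to \eqref{FfF} using $z=\eta-W\bar\eta$ and the identity $\eta=M(z+W\bar z)$ from \eqref{invFC}, which is the quickest independent check. The scalar product \eqref{XofiXX} then follows from \eqref{ofiXX} because $FC$ fixes $W$ and acts on $\C^n$ by $z=\eta-W\bar\eta$; one must note that the Jacobian of the real-linear map $\eta\mapsto z=\eta-W\bar\eta$ on $\C^n$ equals $\det(\un-W\bar W)$ in the real sense (equivalently $|\det(\un-\bar W W)|$, which is positive on $\mc{D}_n$), so that $\dd z=\det(\un-W\bar W)\,\dd\eta$. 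Combining this Jacobian factor with $K^{-1}$ rewritten via $\mathcal{F}$ and $Q=\det(\un-W\bar W)^{-(n+2)}$ reproduces $\rho_2=\det(\un-W\bar W)^{q}\exp(-\mathcal{F})$ with $q=k/2-n-1$, absorbing the shift from $p=k/2-n-2$.

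Finally, the derivative relation $\nabla F=-\nabla\mathcal{F}=\tfrac12\bar\eta\otimes\bar\eta$ I would obtain in two complementary ways: on the one hand $\nabla F=\tfrac12\bar\eta\otimes\bar\eta$ is already \eqref{ultiM}; on the other hand, differentiating \eqref{LUI} and using \eqref{mircea} together with \eqref{LIK} gives $\nabla\mathcal{F}=-\tfrac12\bar\eta\otimes\bar\eta$, where one must remember that under $FC$ the variable $\eta$ is held independent of $W$, whereas in $F$ the variable $z$ is held fixed, which accounts for the sign flip — precisely the change-of-variables statement that makes $(\eta,W)$ the ``convenient'' coordinates of the title. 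I do not expect any genuine difficulty here beyond careful tracking of which variables are held constant; the only subtlety worth flagging in the writeup is that the positivity of the Jacobian on $\mc{D}_n$ must be invoked to drop the absolute value, and this follows since $\un-\bar W W>0$ on $\mc{D}_n$.
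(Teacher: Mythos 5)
Your proposal is correct and follows essentially the same route as the paper: a brute-force expansion of $F$ under the substitution $x=\eta-V\bar{\eta}$, $y=\xi-W\bar{\xi}$ using the identity $UW\bar{V}=U-\un$ (your "$U^{-1}=\un-W\bar V$"), the specialization $\zeta=0$ for \eqref{LUI}, and the block-determinant computation of the Jacobian $\dd z=\det(\un-W\bar{W})\,\dd\eta$ to shift $p$ to $q=p+1$ in $\rho_2$. Your two-sided verification of $\nabla F=-\nabla\mc{F}=\tfrac{1}{2}\bar{\eta}\otimes\bar{\eta}$, with the remark about which variable is held fixed, is a correct (and slightly more explicit) account of what the paper leaves implicit.
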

\begin{proof} Formula (\ref{MNKM}) is obtained by brute force
calculation, using the relation $UW\bar{V}=U-\un$. 

 In the expression (\ref{kul}), we make the change of
variables (\ref{zzZ}), and we get easily the expression of $\mathcal{F}$ given in
(\ref{LUI}). 

In order to get the factor $\rho_2$ in the expression (\ref{XofiXX}),
we  firstly note that  
 $$(z^t,\bar{z}^t)=(\eta^t,\bar{\eta}^t)
\left(\begin{array}{cc} \un &-\bar{W}\\- W & \un\end{array}\right).$$
Now, we observe that if we have a linear transformation  $y=Cx$ of the
column $n$-vectors $y$ and $x$, then 
$$y_1\wedge\dots \wedge y_n=\det(C) x_1 \wedge\dots \wedge x_n.$$
We apply the formula $$\det\left(\begin{array}{cc} A & B\\ C & D \end{array}\right)
=\det A \det(D-CA^{-1}B) .$$
\end{proof}

\section{Classical motion  and quantum evolution on Siegel-Jacobi domains}\label{CLSQ1}

Let $M=G/H$ be a homogeneous  manifold with the $G$-invariant K\"ahler two-form
$\omega$ \eqref{kall}.
The energy function $\mc{H}$ (the classical Hamiltonian, or the covariant 
  symbol)  attached to
  the quantum Hamiltonian $\mb{H}$ is \cite{berezin}
    $$\mc{H}(z,\bar{z})=<e_{\bar{z}},e_{\bar{z}}>^{-1}<e_{\bar{z}}|\mb{H}|e_{\bar{z}}>.$$
 
Passing on from the dynamical system problem
 in the Hilbert space $\Hi$ to the corresponding one on $M$ is called
sometimes {\it dequantization}, and the dynamical system on $M$ is a classical
one \cite{sbcag,sbl}. Following Berezin \cite{berezin2,berezin1}, the
motion on the classical phase space can be described by the local
equations of motion
\begin{equation}\label{CLBER}
\dot{z}_{\alpha}=\i \left\{\mc{H},z_{\alpha}\right\},
  ~\alpha \in \Delta_+ ,
\end{equation}
where  the Poisson bracket is
  defined as 
$$\{f,g\}= \sum_{\alpha,\beta\in\Delta_+}(\mc{G}^{-1})_{
\alpha,\beta}(\frac{\pa f}{\pa z_{\alpha}}\cdot \frac{\pa g }{\pa
\bar{z}_{\beta} }- \frac{\pa f}{\pa \bar{z}_{\alpha}}\cdot \frac{\pa g
}{\pa z_{\beta} }),$$ 
 $f,g\in C^{\infty}(M)$ and $(\mc{G}^{-1})_{\alpha,\beta}$ are the
matrix elements of the inverse of the matrix $\mc{G}$ defined in
\eqref{Ter}.

The classical equations of motion \eqref{CLBER} on  the manifold $M$
can be written down as 
\begin{equation}
\i\left(\begin{array}{cc} \zn & \mc{G}\\ -\bar{\mc{G}} &
    \zn\end{array}\right)\left(\begin{array}{c}
    \dot{z}\\ \dot{\bar{z}}\end{array}\right) = -
  \left(\begin{array}{c}{\pa }/{\pa z} \\ {\pa }/{\pa \bar{z}}\end{array}\right)\mc{H}.
\end{equation}

We consider an algebraic Hamiltonian linear in the generators
${{X}}_{\lambda} $ of the
group of symmetry $G$
\begin{equation}\label{lllu}
H=\sum_{\lambda\in\Delta}\epsilon_{\lambda}{{X}}_{\lambda} ,
\end{equation}
and we also consider the associated operator $\mb{H}$. The energy
function associated to the Hamiltonian \eqref{lllu} is
$$\mc{H}(z,\bar{z})=\sum_{\lambda\in\Delta}\epsilon_{\lambda}\frac{(e_{\bar{z}},\mb{X}_{\lambda}e_{\bar{z}})}{(e_{\bar{z}},e_{\bar{z}})}.$$
Suppose that  $\mb{X}_{\lambda}\in\got{D}_1$, i.e.  the differential action corresponding to the operator
$\mb{X}_{\lambda}$ in (\ref{lllu}) can be expressed in a local
system of coordinates as a holomorphic  first order differential
operator with polynomial coefficients of the type \eqref{sss} 
\begin{equation}\label{VBC}\db{X}_{\lambda}=P_{\lambda}+\sum_{\beta\in\Delta_+}Q_{\lambda,
    \beta}\partial_{\beta}, \lambda\in\Delta.
\end{equation}
If  $P_{\lambda},~ Q_{\lambda,
    \beta}$ are the polynomials attached to the operator
  ${\mb{X}}_{\lambda} $ in \eqref{VBC}, let $\tilde{P}_{\lambda},~\tilde{Q}_{\lambda,
    \beta}$ be the polynomials attached to the operator
  ${\mb{X}}_{\lambda}^{\dagger}$ for the generator ${{X}}_{\lambda}\in\got{g}$   appearing in \eqref{lllu}.

With \eqref{Ter}, we get $$\frac{\pa \mc{H}}{\pa
  \bar{z}_{\beta}}=\sum_{\lambda,\gamma}\epsilon_{\lambda}Q_{\lambda,\gamma}\mc{G}_{\gamma,\beta}, $$
and the  classical motion generated
by the linear Hamiltonian  (\ref{lllu}) is  given by the  equations
of motion  on $M=G/H$ \cite{sbcag,sbl}: 
\begin{equation}\label{moveM}
{\i\dot{z}_{\alpha}=\sum_{\lambda\in\Delta}\epsilon_{\lambda}Q_{\lambda
,\alpha}},~\alpha\in\Delta_+ . 
\end{equation}
Similarly, the equations of motion on $M=G/H$ determined by a
Hamiltonian \begin{equation}\label{llluT}
\mb{H}=\sum_{\lambda\in\Delta}\epsilon_{\lambda}{\mb{X}}^{\dagger}_{\lambda} ,
\end{equation}
are
\begin{equation}\label{moveMT}
{\i\dot{\tilde{z}}_{\alpha}=\sum_{\lambda\in\Delta}\epsilon_{\lambda}\tilde{Q}_{\lambda
,\alpha}},~\alpha\in\Delta_+.   
\end{equation}

We look also for the solutions of the Schr\"odinger equation  attached
to the Hamiltonian  $\mb{H}$  (\ref{lllu}) 
\begin{equation}\label{SCH}
\mb{H}\psi = \i \dot{\psi},~~
\quad{\mbox{where~~~}}\psi= \e ^{\i \varphi}<e_z,e_z>^{-1/2}e_{\bar{z}}.
\end{equation}

Summarizing, in  the conventions at the beginning of \S \ref{DIFFAC}
and with the observation \eqref{bidiff}, we formulate the following  (see also \cite{sbl}, \cite{swA})  
\begin{Proposition}\label{MERETH}
On the homogenous  manifold $M = G/H $ on which  the generators
$X_{\lambda}\in\got{g}$ admit a holomorphic
representation as in  \eqref{VBC}, the classical
motion and the quantum evolution generated by the linear Hamiltonian
\emph{\text{(\ref{lllu})}}  are given by the same equation of motion
\emph{\text{(\ref{moveM})}}. The phase $\varphi$ in \emph{\text{(\ref{SCH})}} is given by the
sum $\varphi=\varphi_D +\varphi_ B$ of the
dynamical and Berry phase, 
\begin{subequations}
\begin{eqnarray}
\varphi_D & =& -\int_0^t\mc{H}(t)\dd t, \quad{\mbox{\emph{where}~~~}}\label{phi2}\\
\mc{H}(t) & = &\sum_{\lambda\in\Delta}\epsilon_{\lambda}\frac{(e_{\bar{z}},\mb{X}_{\lambda}e_{\bar{z}})}{(e_{\bar{z}},e_{\bar{z}})}\label{phiD1}\\
& = &\sum_{\lambda\in\Delta}\epsilon_{\lambda}(\tilde{P}_{\lambda}+
 \sum_{\beta\in\Delta_+}\tilde{Q}_{\lambda,\beta}\pa_{\beta}\ln <e_z,e_z>) \label{phiD2}\\
 & = &
\sum_{\lambda\in\Delta}\epsilon_{\lambda}\tilde{P}_{\lambda}+
\i\sum_{\beta\in\Delta_+}\dot{\tilde{z}}_{\beta}\pa_{\beta}\ln <e_{\bar{z}},e_{\bar{z}}>{\mbox{~}}\label{phiD3};\\
\varphi_B & = &   -\Im\int_0^t<e_{\bar{z}},e_{\bar{z}}>^{-1}<e_{\bar{z}}|\dd|e_{\bar{z}}>\label{phiB}\\
 & = & \nonumber
 \frac{\i}{2}\int_0^t\sum_{\alpha\in\Delta_+}(\dot{z}_{\alpha}\pa_{\alpha}-\dot{\bar{z}}_{\alpha}\bar{\pa}_{\alpha} ) \ln <e_{\bar{z}},e_{\bar{z}}>.
\end{eqnarray}
\end{subequations}
\end{Proposition}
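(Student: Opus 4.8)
The plan is to establish Proposition \ref{MERETH} by assembling three largely independent pieces: the equivalence of classical and quantum motion, the identification of the dynamical phase, and the identification of the Berry phase. The backbone is the bidifferential identity \eqref{bidiff} together with the holomorphic realization \eqref{car1}--\eqref{sss}, so most of the argument is formal bookkeeping with coherent state vectors.

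\emph{Step 1 (classical = quantum equation of motion).} First I would take the Schr\"odinger ansatz $\psi=\ee^{\i\varphi}\langle e_z,e_z\rangle^{-1/2}e_{\bar z}$ and substitute it into $\mb{H}\psi=\i\dot\psi$. Applying $\Phi$ of \eqref{aa} turns $\mb{H}$ into $\sum_\lambda\epsilon_\lambda\db{X}_\lambda$ acting on $f_\psi$, and the left side is evaluated through \eqref{car1} as $\langle e_{\bar z},\mb{H}\psi\rangle$. Writing out $\i\dot\psi$ by the chain rule produces a term $\i\sum_\alpha\dot z_\alpha\pa_\alpha$ acting on $e_{\bar z}$, plus terms from $\dot\varphi$ and from the normalization $\langle e_z,e_z\rangle^{-1/2}$. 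Matching the coefficient of $\pa_\alpha e_{\bar z}$ on both sides — using that $\db{X}_\lambda = P_\lambda + \sum_\beta Q_{\lambda,\beta}\pa_\beta$ — forces $\i\dot z_\alpha = \sum_\lambda \epsilon_\lambda Q_{\lambda,\alpha}$, which is exactly \eqref{moveM}. Comparing this with the Berezin equations \eqref{CLBER}: one computes $\pa_\beta\mc{H} = \sum_{\lambda,\gamma}\epsilon_\lambda Q_{\lambda,\gamma}\mc{G}_{\gamma,\beta}$ from \eqref{Ter} (the $\tilde P_\lambda$ and the derivative of $P_\lambda$ drop out because $\mc{H}$ is evaluated on the diagonal and $P_\lambda$ is holomorphic), and substituting into $\dot z_\alpha = \i\{\mc{H},z_\alpha\} = \i\sum_\beta(\mc{G}^{-1})_{\alpha\beta}\pa_{\bar z_\beta}\mc{H}$ — being careful about the antiholomorphic version of \eqref{Ter} and the conjugation conventions — reproduces \eqref{moveM} again, so both dynamical systems coincide.

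\emph{Step 2 (the two phases).} Separating the remaining scalar part of the matched equation gives $\dot\varphi = -\mc{H}(t) + (\text{a purely imaginary normalization term})$, which naturally splits as $\varphi=\varphi_D+\varphi_B$. For $\varphi_D$ I would just read off \eqref{phi2} with $\mc H(t)$ as in \eqref{phiD1}; then rewrite $\langle e_{\bar z},\mb{X}_\lambda e_{\bar z}\rangle/\langle e_{\bar z},e_{\bar z}\rangle$ using \eqref{bidiff}: $\mb{X}_\lambda$ can be moved to the first slot as $\mb{X}_\lambda^\dagger$, whose symbol is $\db{X}_\lambda^\dagger = \tilde P_\lambda + \sum_\beta \tilde Q_{\lambda,\beta}\pa_\beta$ acting on $\langle e_{\bar z},e_{\bar z}\rangle$, giving \eqref{phiD2}; finally using $\i\dot{\tilde z}_\beta = \sum_\lambda\epsilon_\lambda\tilde Q_{\lambda,\beta}$ (the analogue \eqref{moveMT}) yields \eqref{phiD3}. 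For $\varphi_B$, the normalization term is recognized as $-\Im\int_0^t\langle e_{\bar z},e_{\bar z}\rangle^{-1}\langle e_{\bar z}|\dd|e_{\bar z}\rangle$, which is the standard Berry connection; expanding $\dd = \sum_\alpha(\dot z_\alpha\pa_\alpha + \dot{\bar z}_\alpha\bar\pa_\alpha)\dd t$ and taking the imaginary part of the logarithm gives the final displayed formula.

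\emph{Main obstacle.} The delicate point is Step 1's bookkeeping of conjugates and of the holomorphic-versus-antiholomorphic split: $f_\psi$ is holomorphic in $z$ but the normalization $\langle e_z,e_z\rangle$ depends on both $z$ and $\bar z$, and the Hilbert space inner product is antilinear in the first argument, so a sign or a bar is easy to misplace when extracting the $\pa_\alpha$-coefficient and when comparing with the Poisson-bracket form \eqref{CLBER}. Keeping straight that $P_\lambda$ contributes only to the phase (never to $\dot z$) because it is holomorphic, while $\tilde P_\lambda$ and derivatives of $\langle e_z,e_z\rangle$ enter $\mc H$, is where the real care is needed; everything else is routine once \eqref{bidiff}, \eqref{car1} and \eqref{moveM}--\eqref{moveMT} are in hand. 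Since these equations of motion and phase formulas were already established in \cite{sbl,swA} in related settings, I would lean on those references for the lengthier verifications and present here only the coherent state computation sketched above.
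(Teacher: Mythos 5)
Your proposal is correct and follows essentially the same route as the paper, which derives \eqref{moveM} from the Poisson bracket exactly as in your Step 1 and then states the proposition as a summary of that computation together with the coherent-state and phase calculations of \cite{sbl} and \cite{swA} --- precisely the argument you sketch in Step 2 via \eqref{bidiff} and \eqref{moveMT}. The only blemish is a sign and index transposition in your intermediate formula $\dot z_\alpha=\i\sum_\beta(\mc{G}^{-1})_{\alpha\beta}\pa_{\bar z_\beta}\mc{H}$ (the paper's conventions give $\dot z_\alpha=-\i\sum_\gamma(\mc{G}^{-1})_{\gamma\alpha}\pa_{\bar z_\gamma}\mc{H}$), a point you yourself flag as delicate and which does not affect the conclusion \eqref{moveM}.
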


\subsection{Equations of motion}\label{lasst}

Now we consider  a  Hamiltonian linear in the generators of
the group $G^J_n$ 
\begin{equation}\label{HACA}
\mb{H}= \epsilon_i\mb{a}_i+\overline{\epsilon}_i\mb{a}_i^{\dagger} +  
\epsilon^0_{ij}\mb{K}^0_{ij}+
\epsilon^-_{ij}\mb{K}^-_{ij}+\epsilon^+_{ij}\mb{K}^+_{ij}. 
\end{equation}
The hermiticity condition imposes to the matrices of  coefficients $\epsilon_{0,\pm}=(\epsilon^{0,\pm})_{i,j=1,\dots,n} $  the restrictions
\begin{equation}\label{CONDI}
\epsilon_0^{\dagger}=\epsilon_0; ~\epsilon_-=\epsilon_-^t;~
\epsilon_+=\epsilon_+^t; ~ \epsilon_+^{\dagger}=\epsilon_-.
\end{equation}
It is useful to introduce   the matrices $m,n,p,q\in\text{M}(n,\R)$
such that 
\begin{equation}\label{epsmn}
\epsilon_-=m+\i n, ~
  \epsilon_0^t/2=p+\i q; p^t=p; m^t= m; n^t =n; q^t=-q.
\end{equation} 

We shall describe the dynamics   on the Siegel-Jacobi ball (space)
$\mc{D}^J_n$  (respectively, $\mc{X}^J_n$) determined  by the linear
Hamiltonian \eqref{HACA}, \eqref{CONDI} and study the 
effect of the $FC$  ($FC_1$) transform on the equations of motion. We introduce some
notation. Let $W\in M(n,\C)$  be coordinates on a homogenous manifolds
$M=G/H$ 
- below $M$ will be one of the Siegel-Jacobi domains $\mc{D}_n$ or $\mc{X}_n$ -
and let 
$z\in\C^n$. We consider 
 a matrix Riccati equation on the manifold $M$ and a linear differential
equation in $z$
  \begin{subequations}\label{TOTAL}
\begin{align}
\dot{W} & =AW+WD+B+WCW, ~A,B,C,D\in M(n,\C); \label{RICC}\\
\dot{z} & = M+Nz; ~M= E+WF; ~ N= A+WC, ~E,F\in C^n. \label{LINZ}
\end{align}
\end{subequations}
\begin{Proposition} \label{POYT}
The classical motion and quantum evolution generated by  the linear
hermitian  Hamiltonian \eqref{HACA}, \eqref{CONDI}  are described by
first order differential equations:\\
a) on $\mc{D}^J_n$,  $(z,W)\in \C^n\times\mc{D}_n$ verifies
\eqref{TOTAL}, 
with coefficients
 \begin{subequations}\label{hip}
\begin{align}
A_c & =  -\frac{\i}{2}\epsilon_0^t, ~ B_c=-\i\epsilon_-, ~C_c=-\i\epsilon_+, ~
D_c= A_c^t ; \label{hip2}\\
E_c & =-\i\epsilon, ~F_c=-\i\bar{\epsilon}\label{hip1}.
\end{align}
\end{subequations}

b) on $\mc{X}^J_n$,  $(u,v)  \in\C^n\times\mc{X}_n$,  verifies
\eqref{TOTAL}, 
with coefficients  
\begin{subequations}\label{hipP}
\begin{align}
A_r & =
n+q,~B_r  = m-p, ~
C_r = -(m+p),~
D_r=
n-q;\label{hipP2} \\
E_r & = \Im \epsilon; F_r= -\Re\epsilon .\label{hipP1}
\end{align}
\end{subequations}

c) under the $FC$ transform \eqref{zzZ}, the differential equations 
 in the variables $\eta\in\C^n$, $W\in\mc{D}_n$ become independent:  $W$
verifies \eqref{RICC}  with coefficients \eqref{hip2} and $\eta $ verifies  
\begin{equation}\label{hipPRT1}
\i \dot{\eta} =  \epsilon +
\epsilon_-\bar{\eta} + \frac{1}{2}\epsilon_0^t\eta,~\eta\in\C^n.
\end{equation}

d) under the $FC_1$ transform, the equations in the variables
$\eta\in\C^n$, $v\in\mc{X}_n$  become  independent: $\eta$ verifies
\eqref{hipPRT1}, while
$v$ verifies \eqref{RICC} with coefficients \eqref{hipP2}.
\end{Proposition}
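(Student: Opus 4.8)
The plan is to derive the equations of motion directly from the holomorphic differential realization of the Jacobi algebra given in Lemma~\ref{PERKK} (equivalently Lemma~\ref{difjac}), combined with the general formula \eqref{moveM} from Proposition~\ref{MERETH}. Recall that for a linear Hamiltonian $\mb{H}=\sum_\lambda\epsilon_\lambda\mb{X}_\lambda$ the classical motion (which coincides with the quantum evolution on the coherent-state orbit) is $\i\dot z_\alpha=\sum_\lambda\epsilon_\lambda Q_{\lambda,\alpha}$, where $Q_{\lambda,\alpha}$ is the coefficient of $\partial_\alpha$ in the holomorphic first-order operator $\db{X}_\lambda$. So the whole of part~a) reduces to reading off, from \eqref{park}, the coefficients of $\partial/\partial z_k$ and of $\nabla_{ij}$ in each of $\mb a_i,\mb a_i^\dagger,\mb K^0_{ij},\mb K^-_{ij},\mb K^+_{ij}$, multiplying by the corresponding $\epsilon$, summing, and matching the result against the ansatz \eqref{TOTAL}. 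The coefficient of $\nabla_{ij}$ assembles into the right-hand side of the matrix Riccati equation \eqref{RICC} and gives \eqref{hip2}; the coefficient of $\partial/\partial z_k$ assembles into the linear equation \eqref{LINZ} and gives \eqref{hip1}. One must be careful with the symmetrization convention $\nabla_{ij}=\chi_{ij}\partial/\partial w_{ij}$ and with the bookkeeping in $\mb K^+$, where the term $w_{\alpha l}w_{ki}\nabla_{i\alpha}$ produces the $WCW$ term and $\tfrac12(z_lw_{ik}+z_kw_{il})\partial_i$ contributes the $WF$ piece of $M$ in \eqref{LINZ}; this is the one genuinely fiddly index computation, but it is exactly the $n$-dimensional analogue of what was done for $G^J_1$ in \cite{FC}.

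For part~b) the strategy is to transport the result of a) through the partial Cayley transform. By Proposition~\ref{PARTC} the transform $\Phi$ of \eqref{bigtransf} is a K\"ahler homogeneous diffeomorphism intertwining the $G^J_n$-action on $\mc D^J_n$ with the $G^J_n(\R)_0$-action on $\mc X^J_n$ (Proposition~\ref{THETAS}), so the vector field generating the motion on $\mc D^J_n$ pushes forward to the one generating the motion on $\mc X^J_n$. Concretely, I would differentiate the relations $W=(v-\i\un)(v+\i\un)^{-1}$ and $z=2\i(v+\i\un)^{-1}u$ along the flow, substitute the equations \eqref{RICC}--\eqref{LINZ} with coefficients \eqref{hip}, and simplify. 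The output is again of the form \eqref{TOTAL}; matching coefficients and using the dictionary \eqref{epsmn} between the complex coefficients $\epsilon_0,\epsilon_\pm$ and the real matrices $m,n,p,q$ yields \eqref{hipP}. Alternatively, and perhaps more cleanly, one can redo the computation of a) starting directly from the differential realization of $\got g^J_n(\R)$ on $\mc X^J_n$ obtained by conjugating Lemma~\ref{PERKK} with the Cayley transform; either route is routine linear algebra once a) is in hand.

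Parts c) and d) are then immediate consequences of Proposition~\ref{LKJ}. Under $FC$ the pair $(\eta,W)$ evolves by the push-forward of the flow on $(z,W)$; but $W$ is unchanged by $FC$, so $W$ still satisfies \eqref{RICC} with coefficients \eqref{hip2}, and the new action on the $\eta$-factor is precisely \eqref{nouinv}, $\eta_1=p(\eta+\alpha)+q(\bar\eta+\bar\alpha)$, which is affine in $(\eta,\bar\eta)$ with \emph{no} dependence on $W$. Differentiating \eqref{nouinv} at the identity of $G^J_n$, i.e.\ writing $p=\un-\tfrac{\i}{2}\epsilon_0^t\,dt+\dots$, $q=-\i\epsilon_-\,dt+\dots$, $\alpha=-\i\epsilon\,dt+\dots$ (reading off the infinitesimal form from \eqref{xXC}, \eqref{MNB} and the Hamiltonian \eqref{HACA}), gives exactly $\i\dot\eta=\epsilon+\epsilon_-\bar\eta+\tfrac12\epsilon_0^t\eta$, which is \eqref{hipPRT1}. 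For d) one uses instead $FC_1$ of \eqref{XCX1}: Proposition~\ref{LKJ} shows $FC_1$ leaves the $\eta$-coordinate with the \emph{same} affine action \eqref{cvr} (whose infinitesimal form, via \eqref{CUCURUCU1}, is again \eqref{hipPRT1}) while $v$ carries the action \eqref{conf1}, whose infinitesimal form is the Riccati equation \eqref{RICC} with the real coefficients \eqref{hipP2}. The only point requiring care is the consistent identification of the infinitesimal generators: one must check that linearizing the group actions \eqref{x44}, \eqref{xxxX}, \eqref{conf1}, \eqref{conf2} reproduces the same coefficient matrices as reading $Q_{\lambda,\alpha}$ off \eqref{park}, which follows from \eqref{adjk} and the commutation relations \eqref{baza1M}--\eqref{baza2M}. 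I expect this consistency check — keeping the many sign and transpose conventions ($\epsilon_0^t/2=p+\i q$, $A_c=-\tfrac{\i}{2}\epsilon_0^t$, $D_c=A_c^t$, etc.) aligned between the differential-operator computation and the group-action linearization — to be the main source of friction, rather than any conceptual difficulty.
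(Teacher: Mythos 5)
Your parts a) and b) match the paper's proof step for step: the paper likewise reads off the $Q_{\lambda,\alpha}$ from Lemma~\ref{PERKK} via \eqref{moveM} to obtain the explicit system \eqref{Mhip2}--\eqref{Mhip1} on $\mc{D}^J_n$, and then obtains b) by differentiating the partial Cayley transform \eqref{bigtransf} along the flow and matching coefficients (recorded as the dictionary \eqref{AD1}). The genuine divergence is in c). The paper proves \eqref{hipPRT1} by brute force: it differentiates $FC^{-1}$, i.e.\ $\eta=(\un-W\bar W)^{-1}(z+W\bar z)$, substitutes $\dot W$ and $\dot z$ from \eqref{Mhip2}, \eqref{Mhip1}, and verifies by hand that the coefficient of $\eta$ collapses to $\tfrac{1}{2}(\un-W\bar W)\epsilon_0^t$ and that of $\bar\eta$ to $(\un-W\bar W)\epsilon_-$, so that the $W$-dependence cancels. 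You instead linearize the transported group action \eqref{nouinv} at the identity. Your route is the more illuminating one --- it \emph{explains} the decoupling (by Proposition~\ref{LKJ} the $FC$-transformed action on $\eta$ simply does not involve $W$) rather than exhibiting it as a computational coincidence --- while the paper's route is self-contained and immune to convention errors. The one point you must actually nail down is precisely the consistency check you flag, and it is not cosmetic: with the paper's conventions $D(\alpha)=\exp(\alpha a^{\dagger}-\bar\alpha a)$ and $\mb{H}\supset\epsilon_i\mb{a}_i+\bar\epsilon_i\mb{a}_i^{\dagger}$, the naive identification of the one-parameter subgroup gives $\dot\alpha|_{t=0}=-\i\bar\epsilon$, which would put $\bar\epsilon$ where \eqref{hipPRT1} has $\epsilon$; the correct identification $\dot\alpha|_{t=0}=-\i\epsilon$, $\dot p|_{t=0}=-\tfrac{\i}{2}\epsilon_0^t$, $\dot q|_{t=0}=-\i\epsilon_-$ (consistent with $h_c$ in \eqref{Rlin}) comes from the $\dagger$-swap \eqref{bidiff} between the action on $e_{\bar z}$ and on $f_\psi$ that underlies \eqref{moveM}. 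Once that is pinned down your c) is sound, and your d) coincides with the paper's, which likewise just assembles \eqref{hipPRT1} with \eqref{hipP2} to avoid pushing \eqref{hipP1} through \eqref{XCX1}.
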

\begin{proof}
Firstly, we proof \eqref{hip}. With \eqref{nabla1} in Lemma \ref{PERKK},
 we get from (\ref{moveM}) the equations of motion for $z\in\C^n$:
$$\i
\dot{z}_{\alpha}=\epsilon_i\delta_{i\alpha}+\bar{\epsilon}_{i}w_{i\alpha}+\frac{\epsilon^+_{kl}}{2}(z_kw_{il}+z_lw_{ik})\delta_{i\alpha}+\frac{1}{2}\epsilon^0_{kl}z_k\delta_{l\alpha}.$$
The equations of motion  (\ref{moveM}) for $w_{pq}$, $W= (w_{pq})_{p,q=1,\dots,n}$, can be written down as
$$\i\dot{ w}_{pq}=\epsilon_{kl}Q_{kl,pq}. $$   With
\eqref{nabla2}, \eqref{nabla3} in  Lemma
\ref{PERKK} and \eqref{mircea}, we have for  $Q_{kl,pq}$-s the expressions:
\begin{eqnarray*}
\mb{K}^0_{kl} ~\rightarrow~Q_{kl,pq}^0~ & = &
w_{kp}\chi_{pl}\delta_{lq}+w_{kq}\chi_{ql}\delta_{lp}-w_{kl}\delta_{qp}\delta_{lp},\\
\mb{K}^+_{kl} ~\rightarrow~Q_{kl,pq}^+~ & = &  w_{qk}w_{pl}\chi_{pq}+w_{pk}w_{ql}\chi_{qp}-w_{pk}w_{pl}\delta_{pq} ,\\
\mb{K}^-_{kl} ~\rightarrow~Q_{kl,pq}^-~  & = &
\chi_{kl}(\delta_{kp}\delta_{lq}+\delta_{kq}\delta_{lp}-\delta_{kl}\delta_{pq}\delta_{kp}). 
\end{eqnarray*}
Explicitly,  the differential equations  for $(W,z)\in\mc{D}^J_n$ are
\begin{subequations}\label{Mhip}
\begin{eqnarray}
\i \dot{W}  & = &\epsilon_-  +
(W\epsilon_0)^s +W\epsilon_+W, ~ W\in\mc{D}_n,\label{Mhip2}\\
\i \dot{z} & = & \epsilon +
W\overline{\epsilon}+  \frac{1}{2}\epsilon_0^tz+W\epsilon_+z,
~z\in\C^n , \label{Mhip1}
\end{eqnarray}
\end{subequations}
and we have proved a).

Now we prove b). Firstly,  we prove \eqref{hipP2}. 

With the Cayley transform \eqref{big1}, we find for $\dot{W}$ the
value
$$2\i(v+\i\un)^{-1}\dot{v}(v+\i\un)^{-1}= A_cW+WD_c+B_c+WC_cW.$$
In the expression above we replace the value of $W$ as function of $v$
as given by \eqref{big1}, and we find a matrix Riccati equation on
$\mc{X}_n$ in $v$ of the form \eqref{RICC},
where the matrix coefficients $A_r$-$D_r\in M(n,\R)$ are expressed in function of
the coefficients $A_c$-$D_c$  as
\begin{equation}\label{AD1}
\begin{split}
A_r & =\frac{1}{2}(A_c-D_c+B_c-C_c) ,~
B_r  =\frac{1}{2\i}(A_c+D_c-B_c-C_c) ,\\
 C_r & = \frac{1}{2\i}(A_c+D_c+B_c+C_c) ,~
D_r  =\frac{1}{2}(-A_c+D_c+B_c-C_c) . 
\end{split}
\end{equation}
With \eqref{epsmn}, we  find the values given by \eqref{hipP2}.

Now we proof \eqref{hipP1}.  We differentiate \eqref{big2} and, with
\eqref{hip1}, we get successively
\[
\begin{split}
-2\dot{u} & =\dot{v}(\i z) + (v+\i \un)(\i \dot{z}) \\
 & = -2\dot{v}(v+\i\un)^{-1}u +  (v+\i\un)\times\\
 & \times\{\epsilon+  (v+\i\un)^{-1}
 (v-\i\un)\bar{\epsilon}
+[\frac{\epsilon_0^t}{2}+ (v+\i\un)^{-1} (v-\i\un)\epsilon_+]2\i  (v+\i\un)^{-1}\} u\\
 & = (v+\i\un)\epsilon+ (v-\i\un)\bar{\epsilon} +T  (v+\i\un)^{-1}u,
\end{split}
\]
where
$$T  = -2\dot{v}+\i  (v+\i\un)\epsilon_0^t+2\i  (v-\i\un)\epsilon_+.$$
With \eqref{hipP2} we get for $T$ the value
$$ T= [v(\epsilon_0^s+\!\epsilon_++\epsilon_-)+
\i(-\epsilon_0^a+\epsilon_--\epsilon_+)](v+\i\un), $$
and we obtain  \eqref{hipP1}. Explicitly, b) can be write down as 
\begin{subequations}
\begin{align}
-2 \dot{v}  & = \epsilon_0^s -\!(\epsilon_-+\epsilon_+)
+\i v(\epsilon_0^a+\epsilon_--\epsilon_+)+
\label{MhipP2}\\
\nonumber &  \i(-\epsilon_0^a+\epsilon_--\epsilon_+)v+
v(\epsilon_0^s+\epsilon_-+\epsilon_+)v;\\
-2\dot{u} &\! =\!  v(\epsilon\!+\!\bar{\epsilon})+\i(\epsilon\!-\!\bar{\epsilon})+
[v(\epsilon_0^s+\!\epsilon_++\epsilon_-)+
\i(-\epsilon_0^s+\epsilon_--\epsilon_+)]u.\label{MhipP1} 
\end{align}
\end{subequations}

Below we prove  \eqref{hipPRT1}. We take the derivative of $\eta$ in
the $FC^{-1}$ transform \eqref{invFC},  and we get
$$\dot{\eta} = (\un-W\bar{W})^{-1}(\dot{W}\bar{W}+W\dot{\bar{W}})\eta
+
 (\un-W\bar{W})^{-1}(\dot{z}+\dot{W}\bar{z}+W\dot{\bar{z}}).$$
Then we introduce for $\dot{W}$ and $\dot{z}$ the values from
\eqref{Mhip2} and  respectively \eqref{Mhip1}  and we pass from $z$ to $\eta$ with
\eqref{zzZ}:
\[
\begin{split}
\i (\un-W\bar{W})\dot{\eta} & = \{
[\epsilon_-+(W\epsilon_0)^s+W\epsilon_+W]\bar{W}
-W[\bar{\epsilon}_-+(\bar{W}\bar{\epsilon}_0)^s+\bar{W}\bar{\epsilon}_+\bar{W}]
\}\eta + \\
& [\epsilon_-+(W\epsilon_0)^s+W\epsilon_+W](\bar{\eta}-\bar{W}\eta)
-W[\bar{\epsilon}+\bar{W}{\epsilon}+(\frac{\bar{\epsilon}_0^t}{2}+\bar{W}\bar{\epsilon}_+)(\bar{\eta}-\bar{W}{\eta})] +\\
&
\epsilon+W\bar{\epsilon}+(\frac{\epsilon_0^t}{2}+W\epsilon_+)(\eta-W\bar{\eta}) .\end{split}
\]
We calculate the coefficient of $\eta$ as
$\frac{1}{2}(\un-W\bar{W})\epsilon_0^t$, those of $\bar{\eta}$ is
$(\un-W\bar{W})\epsilon_-$, and we get  \eqref{hipPRT1}. 

 In order to
avoid the longer calculation of introducing \eqref{hipP1}  in
\eqref{XCX1},  the motion in $(\eta, v)\in(\C^n,\mc{X}_n)$ at d)  is
obtained putting together \eqref{hipPRT1} and  \eqref{hipP2}.
\end{proof}
Note that   starting  with the {\it Hamiltonian} \eqref{HACA},  {\it linear in the
generators of the Jacobi group } $G^J_n$, {\it the equation of motion
\eqref{hip2} (\eqref{hipP2}) on the
Siegel ball} $\mc{D}_n$ ({\it Siegel upper half plane}
$\mc{X}_n$)  {\it depends {\bf only} on the generators of de group}
$\text{Sp}(n,\R)_{\C}$(respectively, $\text{Sp}(n,\R)$).

\subsection{Solution of the equations of motion}\label{solution}
We  solve the system of differential equations on the Siegel-Jacobi
domains   appearing in Proposition \ref{POYT}. 

{\bf a.} Firstly, we recall  how {\bf to solve the matrix Riccati
equation} \eqref{RICC}  {\bf by linea\-rization}.
{\it If we proceed to the homogenous coordinates} $W=XY^{-1}$, $X,Y\in
M(n,\C)$,  {\it a linear
system of ordinary differential equations is attached to the matrix
Riccati equation} \eqref{RICC} (cf. \cite{levin}, see also \cite{sbl})
\begin{equation}\label{RICClin}
\left( \begin{array}{c}\dot{X}\\\dot{Y}\end{array}\right)=h
\left( \begin{array}{c}{X}\\{Y}\end{array}\right), ~
h=\left(\begin{array}{cc} A & B\\ -C & -D \end{array}\right) . 
\end{equation}
{\it Every solution of} (\ref{RICClin}) {\it is a solution of} (\ref{RICC}),
{\it whenever} $\det (Y)\not=0$. 

 For the motion  on $\mc{D}_n$, the matrix elements  of $h$ in \eqref{RICClin},
denoted  $h_c$, 
are  given in \eqref{hip2},  and 
the linear system of differential equations  attached to the matrix
Riccati equation \eqref{Mhip2} is
 \begin{equation}\label{Rlin}
\left( \begin{array}{c}\dot{X}\\\dot{Y}\end{array}\!\right)=h_c
\left(\! \begin{array}{c}{X}\\{Y}\end{array}\right), ~
h_c=\left(\begin{array}{cc} -\i(\frac{\epsilon_0}{2})^t &
    -\i\epsilon_-\\ \i\epsilon_+& 
\i\frac{\epsilon_0}{2}\end{array}\right), ~W=X/Y\in\mc{D}_n. 
\end{equation}
Due to the conditions   (\ref{CONDI}), {\it the matrix $h_c$ has the form} 
 \eqref{xXC},   {\it and
we conclude that} $h_c \in \got{sp}(n,\R)_{\C}$.  The action of  the 
element $g\in\text{Sp}(n,\R)_{\C}$ \eqref{dgM} on $W\in\mc{D}_n$ is
given by the linear fractional transformation (\ref{x44}). 

 Now  we look at the
matrix Riccati equation \eqref{RICC} on  the Siegel upper half plane $\mc{X}_n$ in the symmetric
variables $v$  with coefficients $A_r-D_r$ given by \eqref{AD1} -- or equation \eqref{MhipP2}.
We associate to the matrix
Riccati equation \eqref{MhipP2} a
linear system of first order differential equations of the type
(\ref{RICClin})  with a matrix coefficients $h_r$
\begin{equation}\label{hrr}
\!\left( \!\begin{array}{c}\!\dot{X}\\\dot{Y}\!\end{array}\!\right)\!=\!h_r
\!\left( \!\begin{array}{c}\!{X}\\{Y}\!\end{array}\right)\!, h_r\!=\! \left(\begin{array}{cc}\! A_r &
    B_r\\ -C_r & -D_r\! \end{array}\right)\!= \left(\begin{array}{cc} \!n+q
    & m-p\\  m+ p  & -n+q \!\end{array}\!\right)\!, \!v=X/Y\in\mc{X}_n.
\end{equation}
  Due to the hermiticity conditions (\ref{CONDI}), {\it the
matrix} $h_r$ {\it verifies the conditions} (\ref{XREAL}), because of
\eqref{AD1},  and  {\it we  see  that} 
$h_r\in\got{sp}(n,\R).$  The matrices $h_r$ and $h_c$ are
related by relations of the type \eqref{MNB}, \eqref{MNB1},
i.e. $h_c=(h_r)_{\C}$
and have the same eigenvalues.

 Using general considerations of solving systems of  first order linear differential
equations \cite{hart} applied to $\got{sp}(n,\R)$ \cite{mey} and  the considerations above,
we formulate 
\begin{Remark}\label{scroafa}
The linear system
of first order differential equations \eqref{Rlin} (\eqref{hrr})  associated to the matrix
Riccati equations \eqref{Mhip2}  (\eqref{MhipP2})
on $\mc{D}_n$ ($\mc{X}_n$) describes   the
  time-dependent vector field induced by the infinitesimal action of
  the group $\text{\emph{Sp}}(n,\R)_{\C}$ (respectively, $\text{\emph{Sp}}(n,\R)$).
 \eqref{Rlin} (\eqref{hrr}) is a linear  Hamiltonian system in the
 meaning of   Meyer
 \cite{mey} or a canonical system in the sense of  Yakulovich \cite{ya}
  (Hamiltonian system) 
  (respectively,  in the sense Yakulovich \cite{ya}) and
  $h_c=(h_r)_{\C}$. 

The infinitesimal group action of $\got{sp}(n,\R)_{\C}$ (respectively, $\got{sp}(n,\R)$) is given by the Lie algebras
homomorphism 
\begin{equation}\label{nuhom}
\nu_c : \got{sp}(n,\R)_{\C}\rightarrow \Ham(\mc{D}_n),\quad \nu_r : \got{sp}(n,\R)\rightarrow \Ham(\mc{X}_n).
\end{equation}
The infinitesimal group action associated to  \eqref{RICClin} is given by 
\begin{equation}\label{infa}
\nu \left(\begin{array}{cc} A & B\\ -C & -D \end{array}\right) =
-(B+AZ+ZD+ZCZ)_{im}\frac{\pa}{\pa w_{im}} . 
\end{equation}
  Let
\begin{equation}\label{UTT}
U(t,t_0)=\left(\begin{array}{cc} U_1(t,t_0) & U_2(t,t_0) \\
    U_3(t,t_0) & U_4(t,t_0)\end{array}\right)
\end{equation}
be the fundamental matrix  of the ordinary differential equation
\eqref{RICClin}, i.e. $\dot{U}=hU$ such that $U(t_0,t_0)=1$, where $h
= h_c$ ($h=h_r$) for the motion on $\mc{D}_n$ (respectively, $\mc{X}_n$). Then the fundamental
solution $U_c(t,t_0)$ ($U_r(t,t_0)$) corresponding to $h=h_c$  ($h=h_r)$ is a  ${\emph{\text{Sp}}}(n,\R)_{\C}$
(respectively ${\emph{\text{Sp}}}(n,\R)$)-matrix and 
$$W(t,t_0)=[U_1(t,t_0)W(t_0)+ U_2(t,t_0)][U_3(t,t_0) W(t_0)+
U_4(t,t_0)]^{-1}$$
is the solution of equation \eqref{RICC} with the initial condition
$W(t_0,t_0)=W(t_0)$. 
\end{Remark}

For selfcontainedness, we recall  the terminology used in Remark
\ref{scroafa}. The systems of linear  ordinary 
differential  equations
$\dot{z}=Az$, $A\in\got{sp}(n,\R)$ appear in the context of {\it linear
Hamiltonian systems} \cite{mey}. The eigenvalues of the Hamiltonian
matrices are described in  Remark \ref{rem77} e).
The Hamiltonian equations can be written as  the system of ordinary
differential equations 
\begin{equation}\label{hamil}
\dot{z}=J\nabla H,
\end{equation}
where $z^t=(q^t,p^t)$, $q,p\in\R^n$,  $H=H(t,q,p)$ is the
Hamiltonian, and here $\nabla H$$=$$(\frac{\pa H}{\pa z_1}$,$\dots$,
$ \frac{\pa H}{\pa z_{2n}} )$. The Hamiltonian system \eqref{hamil} is
called a {\it
  Hamiltonian linear system}  (cf \cite{mey},  also called {\it
  canonical}, cf \cite{ya}  p. 110), if  the Hamiltonian $H$ has the form
\begin{equation}\label{linie}
H=\frac{1}{2}z^tSz,~S\in M(2n,\R) ,~ S=S^t, 
\end{equation}
 and the system \eqref{hamil}  is written as the system of linear
ordinary differential equations 
\begin{equation}\label{alth}
\dot{z}=JS z= A z,~ A\in\got{sp}(n,\R), ~ S=S^t. 
\end{equation}
The equation $\dot{z}=Az$,  $A\in\got{sp}(n,\R)_{\C}$,  where  $A$ is expressed as in
\eqref{ciudat}, is also called {\it Hamiltonian}, cf.  \cite{ya} p. 111.

{\bf b.} Now we discuss the linear system of the type \eqref{alth}
associated to the {\bf  matrix Riccati equation} 
  \eqref{MhipP2}  {\bf in the case of $T$-periodic coefficients}. Let $\Delta(t)$
be the fundamental solution of  the equation \eqref{alth} with
periodic coefficients  such $\Delta(0)=\dn$. Then $\Delta(t+T)=\Delta(t)\Delta(T),
\forall t\in\R$.  We take over  {\it Floquet-Lyapunov Theorem}  and {\it Krein Gel'fand Theorem},
 (cf. Proposition 4.2.1, 
Theorem 3.4.2 and Corollary 3.4.1 in \cite{mey}; see also Ch II in
\cite{ya}), which we formulate as:
\begin{Remark}\label{rem12}
The {\it monodromy}  matrix $\Delta(T)$ is a
nonsingular, 
symplectic matrix.  The
  fundamental matrix solution of Hamiltonian equation \eqref{alth}  that
  satisfies $\Delta(0)=\dn$ is of the form $\Delta(t)=X(t)\exp(Kt)$, 
  where $X(t)$ is symplectic and $T$-periodic and $K$ is Hamiltonian. 
Real $X(t$) and $K$ can be found by taking $X(t)$ to be $2T$-periodic
if necessary. The change of variables $z= X(t)w$ transforms
the periodic Hamiltonian system \eqref{alth} to the constant
Hamiltonian system
\begin{equation}\label{kw}
\dot{w}=Kw,~ K\in\got{sp}(n,\R).
\end{equation}
The linear autonomous Hamiltonian system \eqref{kw}  is stable
iff:  (i) $K$ it  has only pure imaginary eigenvalues and (ii) $K$ is
diagonalizable (over the complex numbers).\\
The
system \eqref{kw}  is parametrically stable if and only if:
(i) All the eigenvalues of $K$ are pure imaginary $\pm\i\alpha_i$;
(ii) $K$ is nonsingular;
(iii)  The matrix $K$  is diagonalizable over the complex numbers;
(iv)  The restriction of the Hamiltonian $H$ to the linear space
generated by $\pm\i\alpha_j$ is positive or negative definite for each $j$.
\end{Remark}

{\bf c.}  Let us discuss {\bf  the solution of the matrix Riccati equation}
(\ref{RICC}) {\bf in the case of constant coefficients}. Let
$\lambda_1,\dots,\lambda_{2n}$ be the eigenvalues of characteristic
equation associated to the matrix $h$ defined in \eqref{RICClin}
\begin{equation}
\det (h-\lambda \dn)=0.
\end{equation}
Let 
\begin{equation}\label{486}
\Lambda =\left(\begin{array}{cc} \Lambda_1 & 0\\ 0 &\Lambda_2\end{array}\right),
\end{equation}
where $\Lambda_1$ ($\Lambda_2$) is a diagonal matrix with entries
$\lambda_i$, $i=1,\dots,n$ (respectively, $i=n+1,\dots, 2n$).

Let us suppose that the matrix $h$ has {\it a simple structure}  (cf.
Ch. III in \cite{gant} or $h$ has {\it simple elementary divisors} cf. \cite{ya}) and let $V$
be {\it the fundamental matrix}  \cite{gant} of $h$ \eqref{RICClin},
i.e.
\begin{equation}\label{DIAGV}
V h = \Lambda V. 
\end{equation} 
Let us consider a partition of $V$ into block form
\begin{equation}
V=\left(\begin{array}{cc} V_1 & V_2\\ V_3 & V_4,\end{array}\right),
\end{equation}
where $V_i, i=1,\dots,4\in M(n,\C)$.

Let $Q=Q(W,A,B,C,D)$ the matrix which appears
in the rhs of \eqref{RICC}, and let us denote by  $Q_0$ the value of
$Q$ at  $W_0=W(t=0)$.
\begin{Remark}\label{Rem5}
The autonomous linear system \eqref{RICClin} associated to the matrix
Riccati equation \eqref{RICC} has the standard solution
\begin{equation}\label{QWE} 
\left(\begin{array}{c} X\\Y\end{array}\right) = \e^{t h}
\left(\begin{array}{c} X_0\\Y_0\end{array}\right) . 
\end{equation}
The solution $W$ of the matrix Riccati
equation \eqref{RICC} has the  Taylor expansion 
$$W(t)=W_0+tQ_0+\frac{t^2}{2}[(A+W_0C)Q_0+Q_0(D+CW_0)]+\dots. $$

If the matrix $h$ \eqref{RICClin}  has a simple structure, then  the solution of the matrix Riccati equation \eqref{RICC} with constant
coefficients with the initial condition $W(0)=W_0$  is
\begin{subequations}\label{VRRR}
\begin{eqnarray}
~ W & = & (V_1-W'V_3)^{-1}(W'V_4-V_2), \label{VR1}\\
~ W' & = & W'(t,W'_0) =
\e^{t\Lambda_1 }W'_0\e^{-t\Lambda_2 },\label{VR2}\\
 ~ W'_0 & = & (V_1W_0+V_2)(V_3W_0+V_4)^{-1}.\label{VR3}
\end{eqnarray}
\end{subequations}
Closed paths on the Siegel ball $\mc{D}_n$ ($W(T)=W(0)$) can be obtained if the
imaginary eigenvalues  $\lambda_i$ of the matrix $h$ are rationally
commensurable and different and $T$ is the least common multiple of
$2\pi|\lambda_m-\lambda_i|$, $i=1,\dots,n$,  $m=n+1,\dots,2n$. 

Suppose now we are in the case when the autonomous Hamiltonian
system \eqref{Rlin} is stable: the matrix $h_r$ has $2n$
purely imaginary distinct  eigenvalues and  the matrix $h_r$ can be
put in the form
\begin{equation}\label{hr}
h_r = \left(\begin{array}{cc}\zn &  d\\ -d & \zn \end{array}\right),
\quad d=\emph{\text{diag}}(\alpha_1,\dots,\alpha_n).
\end{equation}
Correspondingly, in \eqref{486} we get 
\begin{equation}\label{lamb12}
\Lambda_1=\i d, ~~\Lambda_2=-\i d
\end{equation}
which have  to be introduced in \eqref{VRRR}  for the motion on
$\mc{X}_n$ in the case of the constant coefficients in \eqref{hipP2}.

We also  are in the case where the matrix $h_c$ has 2n
distinct pure imaginary eigenvalues and  $h_c$ can be put into  the form 
\begin{equation}\label{hc}
h_c = \left(\begin{array}{cc}\i d  &  \zn \\ \zn  & -\i d \end{array}\right),
\quad d=\emph{\text{diag}}(\alpha_1,\dots,\alpha_n),
\end{equation}
 with the same
eigenvalues as in \eqref{lamb12}. 
\end{Remark}
\begin{proof} As was mentioned in  Remark \ref{scroafa}  a),  the linear system of first order differential equations
(\ref{RICClin}) is associated with the matrix Riccati equation
(\ref{RICC}). A solution to (\ref{RICClin}) projects to a solution to
(\ref{RICC}) via the map $\Psi(X,Y)=XY^{-1}$.
The autonomous linear system (\ref{RICClin})  has the standard solution
\eqref{QWE} \cite{hart}.
We recall that if $A\in M(n,\mathbb{F})$, ($\mathbb{F}=\R$ or $\C$) has the
minimal polynomial $\psi(\lambda)=(\lambda-\lambda_1)^{m_1}
(\lambda-\lambda_2) ^{m_2}\cdots (\lambda-\lambda_s)^ {m_s}$, where
$\lambda_1,\dots,\lambda_s$ are characteristic roots of $A$,  then 
$$\e^{At}=\sum_{k=1}^s[Z_{k1}+Z_{k2}t+\dots +Z_{km_k}t^{m_k-1}]\e^{\lambda_kt}.$$
The matrices $Z_{kj}$ are completely determined by $A$ \cite{gant}. In
particular, if the minimal polynomial has only simple roots,  the
Lagrange-Sylvester interpolation formula gives for $A\in M(n,\C)$ \cite{gant}
$$\e^{At}=\sum_{k=1}^n\frac{(A-\lambda_1\un)\cdots(A-\lambda_{k-1}\un)\cdots(A-\lambda_{k+1}\un)\cdots(A-\lambda_{n}\un)}
{(\lambda_k-\lambda_1)\cdots (\lambda_k-\lambda_{k-1})\cdots (\lambda_k-\lambda_{k+1})\cdots (\lambda_k-\lambda_n)}\e^{\lambda_kt}.$$

In the situation of Remark \ref{rem12},  we diagonalize the matrix $h$ via  (\ref{DIAGV}) and we make a
change of variables
\begin{equation}
\left(\begin{array}{c} X' \\Y'\end{array}\right) = 
\left(\begin{array}{cc} V_1 & V_2\\ V_3 & V_4\end{array}\right)
\left(\begin{array}{c} X\\Y\end{array}\right) . 
\end{equation}
The system (\ref{RICClin}) in the new variables  reads
$$\dot{X}'=\Lambda_1 X'; ~ \dot{Y}'=\Lambda_1 Y', $$
with the solution $$X'=\e^{t \Lambda_1}X'_0; Y'= \e^{t
  \Lambda_2}Y'_0.$$
We calculate  $W'=X'{Y'}^{-1}$ and get the expression (\ref{VR2}),
then we calculate $W'_0=X'_0(Y'_0)^{-1}$ and we obtain (\ref{VR3}), and
finally, the equation (\ref{VR1}). In the situation of Remark
\ref{rem12}, $\Lambda_1=\i d$, $\Lambda_2=-\i d$.

The assumption contained in \eqref{hr} with the consequence
\eqref{lamb12} is expressed in Proposition 3.1.18 in \cite{am} and
\eqref{hc} appears in
Remark \ref{rem77},  e).  Then we
apply the transform \eqref{pana} and the assertion for
$\got{sp}(n,\R)_{\C}$ follows.
\end{proof}

{\bf d.} Finally, we discuss {\bf  how to
solve the decoupled system  at c) in Proposition \ref{POYT}}. In (\ref{hipPRT1}) we
introduce $\eta=\xi - \i \zeta, \xi,\zeta\in\R^n$, we put 
$\epsilon= b+\i a$, where $a,b\in\R^n$.  The first order complex differential equation equation
(\ref{hipPRT1}) is equivalent with a system of first order real 
differential equations with real coefficients, which we write as  
\begin{equation}\label{LINe} \dot{Z}=h_rZ + F, ~ Z = 
\left( \begin{array}{c}{\xi} \\
    {\zeta}\end{array}\right), ~ 
 F = \left(\begin{array}{c}a \\ b \end{array}\right).
\end{equation}
 Because of \eqref{CONDI}, the matrices $p,m,n$ are symmetric, while $q$
is antisymmetric, as in \eqref{epsmn}.  This means that the matrix $h_r$ has the form
\eqref{XREAL}, i.e. $h_r\in\got{sp}(n,\R)$, as was already underlined
in Remark \ref{scroafa}. With general considerations on solving first order  linear systems of 
nonhomogenous equations \cite{hart}  particularized to the case of
Hamiltonian  matrices \cite{mey}, we get 
\begin{Remark}
Let $\Delta(t,t_0)$ be the fundamental solution of the homogeneous
equation $\dot{Z}= h_r Z$ associated with \eqref{LINe}, where $h_r\in\got{sp}(n,\R) $.  The
fundamental matrix solution $\Delta(t,t_0)$ of the linear Hamiltonian
system is symplectic. The solution of the inhomogeneous
equation \eqref{LINe} is
$$Z(t)=\Delta (t,t_0)Z(t_0)+\int_{t_0}^t\Delta
(t,\tau)F(\tau)\dd\tau. $$
In the case of constant coefficients, the solution of \eqref{LINe}
is
$$Z=\e^{h_r(t-t_0)}Z_0+ \int_{t_0}^t\e^{h_r(t-\tau)}F(\tau)\dd\tau.$$
\end{Remark}
So, solving the equation (\ref{LINe}), we find  $\eta=\xi-\i\zeta$,
and we  find the solution of (\ref{hipPRT1}).

The solution of
(\ref{LINZ})  with coefficients \eqref{hipP}, i.e. \eqref{Mhip1},  is
obtained via the $FC$ transform  $z=\eta-W\bar{\eta}$. 

\section{Phases}\label{FAZE}
\subsection{Berry phase for $\mathcal{D}^J_n$}
 Formula \eqref{phiB}  for the Berry  phase  on the
Siegel-Jacobi ball 
$\mc{D}^J_n$ in the variables $(W,z)\in\mc{D}_n\times\C^n$ reads:
$$\frac{2}{\i}\dd\varphi_B = (\sum \dd w_{ij}\frac{\pa }{\pa w_{ij}}- cc )f
+(\sum \dd z_i\frac{\pa }{\pa z_i}-cc )f, $$
where  $(X-cc)$ means $(X-\bar{X})$. 
Above $f$ is the K\"ahler potential (\ref{kelerX})
written as $$f=-\frac{k}{2}\log (\det(\un-W\bar{W})) + F.$$
With \eqref{DERCV}, \eqref{ultiM} and \eqref{derA}, 
we have
\begin{subequations}\label{again}
\begin{eqnarray}
\frac{\pa f}{\pa z_i} & = & \bar{\eta}_i,\label{again1}\\
\frac{\pa f}{\pa w_{ik}} & = &
\frac{k}{2}(2X_{ik}-X_{ik}\delta_{ik})+\bar{\eta}_i\bar{\eta}_k-
\frac{1}{2}\bar{\eta}_i\bar{\eta}_k\delta_{ik}, \quad{{\text{or
  }~~~}} \nabla f=\frac{1}{2}(kX+\bar{\eta}\otimes\bar{\eta} ) . \label{again2}
\end{eqnarray}
\end{subequations}

With the $FC$-transform \eqref{zzZ} $z_i=\eta_i-w_{ij}\bar{\eta}_j$,  we find 
\begin{Remark}
The Berry phase on the Siegel-Jacobi ball $\mc{D}^J_n$ is expressed
in the variables $(W,\eta)\in\mc{D}_n\times\C^n$ as 
\[
\begin{split}
\frac{2}{\i}\dd\varphi_B  & =
\{[\sum\frac{k}{2}(2X_{ij}-X_{ij}\delta_{ij})-\frac{1}{2}\bar{\eta}_i\bar{\eta}_j\delta_{ij}]\dd
w_{ij}-cc\}
  + [(\bar{\eta}_i+\bar{w}_{ij}\eta_j)\dd \eta_i-cc]\\
 & = 
\{\frac{k}{2}[2\tr(X\dd
W)-\tr(\dia(X)\dia(\dd
W))]-\frac{1}{2}\bar{\eta}^t\dia(\dd W)\bar{\eta}-cc\}\\
& +[\dd\bar{\eta}^t(\bar{\eta}+\bar{W}\eta)-cc].
\end{split}
\]
\end{Remark}

  \subsection{Dynamical phase}   We calculate the energy function
  attached to the Hamiltonian (\ref{HACA}) using the formula
  (\ref{phiD1}). We write down   $\mc{H} =
  \mc{H}_1+ \mc{H}_2$, where  $\mc{H}_1$ is the first term $\mc{H}_1=
\sum_{\lambda\in\Delta}\epsilon_{\lambda}\tilde{P}_{\lambda}$  in
\eqref{phiD2}, while $ \mc{H}_2$ is the rest in \eqref{phiD2}, which
corresponds to 
$\i\sum_{\beta\in\Delta_+}\dot{\tilde{z}}_{\beta}\pa_{\beta}f$, but we
prefer the brute force calculation with the formulae \eqref{Ppark}. With
\eqref{Ppark}  and \eqref{again1}, we get 
\begin{equation}\label{gfgh}
\mc{H}_1    =   {\epsilon}^tz+\frac{k}{4}\tr
(\epsilon_0)+\frac{k}{2}\tr(\epsilon_-W)+\frac{1}{2}z^t\epsilon_-z . 
\end{equation}
For $\mc{H}_2$, we get with \eqref{Ppark}
\begin{subequations}\label{H2X}
\begin{align}
\mc{H}_2  & =
\epsilon^tW\bar{\eta}+\bar{\epsilon}^t\bar{\eta}+\frac{1}{2}z^t\epsilon^t_0\bar{\eta}+z^t\epsilon_-W\bar{\eta}+\mc{H}_3,\\
\mc{H}_3  & = (\epsilon^0_{kl}w_{li}\nabla_{ik}+
\epsilon^+_{kl}\nabla_{kl}+\epsilon^-_{kl}w_{\alpha
l}w_{ki}\nabla_{i\alpha})f, 
\end{align}
\end{subequations}
where $\nabla f$ has  the value
 \eqref{again2}. 

Let us denote by $\Lambda =\Lambda(W,\epsilon_-,\epsilon_+,\epsilon_0)$
  the matrix appearing in  the rhs of \eqref{Mhip2}, i.e. $\Lambda =
  \i Q$. We get
\begin{equation}\label{chi3}
\mc{H}_3 = \frac{1}{2}[k\tr(\tilde{\Lambda} X)+\bar{\eta}^t\tilde{\Lambda}\bar{\eta}],
\quad{\text{where}}~ \tilde{\Lambda} = \Lambda(W,\epsilon_+,\epsilon_-,\epsilon^t_0)
=\epsilon_++(\epsilon_0W)^s+W\epsilon_-W. 
\end{equation} 
Now we apply  the $FC$-transform \eqref{zzZ} in \eqref{gfgh}   and
\eqref{H2X} and we express  the energy
function as sum of two separated terms in the  independent  variables
$\eta\in\C^n$ and $W\in\mc{D}_n$
\begin{subequations}\label{enf}
\begin{align}
\mc{H} & = \mc{H}_{\eta}+ \mc{H}_{w},\\
\mc{H}_{\eta} & =\epsilon^t\eta +
\bar{\epsilon}^t\bar{\eta}+\frac{1}{2}(\eta ^t\epsilon_- \eta
+\bar{\eta}^t\epsilon_+\bar{\eta} + 
\bar{\eta}^t\epsilon_0 \eta   ) , \\
\mc{H}_{w} & = \frac{k}{2}\tr\{(\epsilon_0)^s+ [W\epsilon_-+\epsilon_+\bar{W}+(\epsilon_0W)^s\bar{W}](\un-W\bar{W})^{-1}\} .
\end{align}
\end{subequations}
We calculate the critical points of the energy function
\eqref{enf} attached to linear hermitian Hamiltonian \eqref{HACA}.
We get 
\begin{subequations}\label{grad}
\begin{align}
\nabla \mc{H}_{w}  & =
2(\un-\bar{W}W)^{-1}\bar{\tilde{\Lambda}}(\un-W\bar{W})^{-1},\\
\frac{\pa \mc{H}_{\eta}}{\pa \eta} & = \epsilon +\epsilon_-\eta +\frac{1}{2}\epsilon^t_0\bar{\eta}.
\end{align}
\end{subequations}
We have proved 
\begin{Remark} The energy function  in  the variables $(\eta,W)\in\C^n\times\mc{D}_n$ attached to the linear hermitian
  Hamiltonian  \eqref{HACA} is given by \eqref{enf}. The critical
  points $W_c$ of the energy functions \eqref{enf} are the
solutions of the algebraic matrix Riccati equations $\bar{\tilde{\Lambda}}
=0$, i. e. the solutions of the equation  $\dot{W}= 0$ in
\eqref{Mhip2}.  The critical value $\eta_c$
corresponds to the solution $\dot{\xi}=0$ , $-\dot{\zeta} =0$ in 
\eqref{LINe}.   
\end{Remark}

{\bf Acknowledgement.}   
 The question of the fundamental conjecture for the
Siegel-Jacobi disk was raised to me by Pierre Bieliavsky.
I am grateful to Mircea Bundaru for useful discussions and 
 for bringing into my attention formula \eqref{mircea}.  
This investigation  was  supported by the 
ANCS project  program PN 09 37 01 02/2009 and  by the UEFISCDI-Romania
 program PN-II Contract No. 55/05.10.2011.

\today
\end{document}